\documentclass[11pt]{amsart}
\usepackage{amsmath}
\usepackage{amsfonts}
\usepackage{color}
\usepackage{verbatim}
\usepackage{eepic}
\usepackage{epic}
\usepackage{epsfig,subfigure}
\usepackage[]{graphics}



\hsize=6.4in
\baselineskip=15pt plus .1pt
\def\tribar{\vert\thickspace\!\!\vert\thickspace\!\!\vert}

\allowdisplaybreaks[1]

\newtheorem{theorem}{Theorem}[section]
\newtheorem{lemma}{Lemma}[section]

\newtheorem{remark}{Remark}[section]

\numberwithin{equation}{section}

\begin{document}

\newcommand{\al}{\alpha}
\newcommand{\be}{\beta}
\newcommand{\om}{\omega}
\newcommand{\ga}{\gamma}
\newcommand{\wh}{\widehat}
\newcommand{\Ga}{\Gamma}
\newcommand{\ka}{\kappa}
\newcommand{\teps}{\tilde e}
\newcommand{\fy}{\varphi}
\newcommand{\Om}{\Omega}
\newcommand{\si}{\sigma}
\newcommand{\Si}{\Sigma}
\newcommand{\de}{\delta}
\newcommand{\De}{\Delta}
\newcommand{\la}{\lambda}
\newcommand{\La}{\Lambda}
\newcommand{\ep}{\epsilon}

\newcommand{\vth}{\vartheta}
\newcommand{\vtht}{{\widetilde \vartheta}}
\newcommand{\rh}{\varrho}
\newcommand{\rlh}{{\widetilde \varrho}}

\renewcommand{\for}{\quad{\mbox{for}}\quad}
\newcommand{\on}{\quad\text{on}\ }
\newcommand{\orr}{\quad\text{or}\ }
\newcommand{\inn}{\quad\text{in}\ }
\newcommand{\as}{\quad\text{as}\ }
\newcommand{\at}{\quad\text{at}\ }
\newcommand{\ifff}{\quad\text{if}\ }
\newcommand{\andy}{\quad\text{and}\ }
\newcommand{\with}{\quad\text{with}\ }
\newcommand{\when}{\quad\text{when}\ }
\newcommand{\where}{\quad\text{where}\ }
\newcommand{\FEM}{\text{finite element method }}
\newcommand{\FE}{\text{finite element }}
\def\tribar{\vert\thickspace\!\!\vert\thickspace\!\!\vert}

\def\Mitaga1{{E_{\al,1}}}
\def\Mitagaa{{E_{\al,\al}}}
\def\C{{\mathbb{C}}}

\def\NN{{N}}
\def\Etilh{{\bar{E}_h}}

%
\def\guh{{\underline u}_h}    
\def\guht{{\underline u}_{h,t}}    
\def\puht{u_{h,t}}                 
\def\puh{\widetilde{u}_h}                 
\def\luh{{\bar u}_h}     
\def\luht{{\bar u}_{h,t}}
\def\luhtt{{\bar u}_{h,tt}}
\def\p{q}
\def\q{\luht}
\def\luhp{\luht}
\def\L2o{{L_2(\Om)}}
\def\K{\tau}
\def\zK{z^\K}
\def\T{{\mathcal{T}}}
\def\theto{{\overline \theta}}
\def\thetu{{\underline \theta}}
\def\ue{{\underline e}}
\def\oe{{\overline e}}
\def\ee{e}
\def\Lah{{\Lambda}_h}
\def\psiu{{\psi_t}}
\def\ww{w}
\newcommand{\EE}{\tilde e}

\def\Dal{{\partial^\alpha_t}}
\def\Dt{{\partial_t}}
\def\Dalpl{{\partial^{\alpha+l}_t}}
\def\gamal{{\gamma}_\alpha}

\def\bDelh{{\bar{\Delta}_h}}
\def\Ftilh{\bar{E}_h}
\def\Pbarh{\bar{P}_h}
\def\Ebar{{F}}


\title[Semidiscrete GFEM  for fractional parabolic PDE's]
{Error estimates for a semidiscrete finite element method for
fractional order parabolic equations} 


\author {Bangti Jin}
\address{Department of
Mathematics and Institute for Applied Mathematics and Computational Science, Texas A\&M
University, College Station, TX, 77843, USA}\email{btjin@math.tamu.edu}
\author {Raytcho Lazarov}
\address{Department of Mathematics, Texas A\&M
University, College Station, TX, 77843,
USA}\email{lazarov@math.tamu.edu}
\author {Zhi Zhou}
\address{ Department of Mathematics, Texas A\&M
University, College Station, TX, 77843,
USA}\email{zzhou@math.tamu.edu}

\keywords{finite element method, fractional diffusion equation, error estimates,
semidiscrete discretization}

\date {begin October 27, 2011, today is \today}
\subjclass {65M60, 65N30, 65N15}

\begin{abstract}
We consider the initial boundary value problem for the homogeneous time-fractional
diffusion equation $\partial^\alpha_t u - \De u =0$ ($0< \alpha < 1$) with initial
condition $u(x,0)=v(x)$ and a homogeneous Dirichlet boundary condition in a bounded
polygonal domain $\Omega$. We shall study two semidiscrete approximation schemes, i.e.,
Galerkin FEM and lumped mass Galerkin FEM, by using piecewise linear functions. We
establish optimal with respect to the regularity of the solution error estimates,
including the case of nonsmooth initial data, i.e., $v \in L_2(\Omega)$.
\end{abstract}

\maketitle

\section{Introduction}\label{sec:intro}

We consider the model initial--boundary value problem for the fractional order parabolic
differential equation (FPDE) for $u(x,t)$:
\begin{alignat}{3}\label{eq1}
   \Dal u-\Delta u&= f(x,t),&&\quad \text{in  } \Omega&&\quad T \ge t > 0,\notag\\
   u&=0,&&\quad\text{on}\  \partial\Omega&&\quad T \ge t > 0,\\
    u(0)&=v,&&\quad\text{in  }\Omega&&\notag
\end{alignat}
where $\Omega$ is a bounded polygonal domain in $\mathbb R^d\,(d=1,2,3)$ with a boundary
$\partial\Omega$ and $v$ is a given function on $\Om$ and $T>0$ is a fixed value.

Here  $\Dal u$ ($0<\al<1$) denotes the left-sided Caputo fractional derivative of order
$\al$ with respect to $t$ and it is defined by 
(see, e.g. \cite[p.\,91]{KilbasSrivastavaTrujillo:2006}
or \cite[p.\,78]{Podlubny_book})
\begin{equation*}
   \Dal u(t)= \frac{1}{\Gamma(1-\al)} \int_0^t(t-\tau)^{-\al}\frac{d}{d\tau}u(\tau)\, d\tau,
\end{equation*}
where $\Gamma(\cdot)$ is the Gamma function. Note that if the fractional order $\al$ tends to unity,
the fractional derivative $\Dal u$ converges to the canonical first-order derivative
$\frac{du}{dt}$ \cite{KilbasSrivastavaTrujillo:2006}, and thus the problem
\eqref{eq1} reproduces the standard parabolic equation. The model \eqref{eq1} is known to capture
well the dynamics of anomalous diffusion (also known as sub-diffusion) in which the mean
square variance grows slower than that in a Gaussian process \cite{BouGeo}, and has
found a number of important practical applications. For example, it was introduced by
Nigmatulin \cite{Nigmatulin} to describe diffusion in media with fractal geometry.
A comprehensive survey on fractional order differential equations arising in dynamical systems in control theory,
electrical circuits with fractance, generalized voltage divider, viscoelasticity,
fractional-order multipoles in electromagnetism, electrochemistry,
 and model of neurons in biology is provided in  \cite{Debnath_2003}; see also \cite{Podlubny_book}.

The capabilities of FPDEs to accurately model such processes have generated
considerable interest in deriving, analyzing and testing numerical methods
for solving such problems.
%
%
As a result, a number of numerical techniques
were developed and their stability and convergence were investigated,
see e.g. \cite{Deng:2008,LanglandsHenry:2005,LiXu:2009,
MeerschaertSchefflerTadjeran:2006,Mustapha:2011,YusteAcedo:2005}.
Yuste and Acedo in \cite{YusteAcedo:2005} presented a numerical scheme by
combining the forward time centered space method and the Grunwald-Letnikov method, and
provided a von Neumann type stability analysis.
By exploiting the
variational framework introduced by Ervin and Roop, \cite{ErvinRoop:2006}, Li and
Xu \cite{LiXu:2009} developed a spectral approximation method in both temporal and spatial
variable, and established various a priori error estimates.
Deng \cite{Deng:2008} analyzed the finite element method (FEM)
for space- and time-fractional Fokker-Plank equation,
and established a convergence rate of $O(\tau^{2-\alpha}+h^\mu)$, with $\alpha\in(0,1)$ and $\mu\in(1,2)$ being
the temporal and spatial fractional order, respectively.

In all these useful studies, the error analysis was done
by assuming that the solution is sufficiently smooth.
The optimality of the established estimates
with respect to the smoothness of the solution expressed through the
problem data, i.e., the right hand side $f$ and the initial data $v$, was not considered.
Thus, these studies do not cover the interesting case of solutions with limited regularity
due to low regularity of the data, 
a typical case for inverse problems related
to this equation; see e.g., \cite{ChengNakagawaYamamoto:2009},
\cite[Problem (4.12)]{Sakamoto_2011}, and also \cite{JinLu:2012,KeungZou:1998}
for its parabolic counterpart.


There are a few papers considering construction and analysis of numerical methods with
optimal with respect to the regularity of the solution
error estimates for the following equation with
a positive type memory terms 
\cite{McLean-Thomee-IMA-2004,McLean-Thomee-IMA,Mustapha:2011}):
\begin{equation}\label{positive}
 \partial_t u - \frac{1}{\Gamma(\al)}\int_0^t (t-\tau)^{\al-1} \Delta u(\tau) d\tau =f(x,t), ~~t>0, ~~~ 0<\al <1,
\end{equation}
This equation is closely related, but different from \eqref{eq1}.
For example, McLean and Thom\'ee in
\cite{McLean-Thomee-IMA-2004,McLean-Thomee-IMA} developed a numerical method based on
spatial finite element discretization and Laplace transformation with quadratures in time
for \eqref{positive} with a homogeneous Dirichlet boundary data.
In \cite[Theorem 5.1]{McLean-Thomee-IMA-2004} the convergence of the
proposed method has been studied and maximum-norm error estimates
of order $O(t^{-1-\al}h^2 \ell_h^2) $, $\ell_h=|\ln h|$, were established
for initial data $ v \in L_\infty(\Om)$.
Further, in \cite[Theorem 4.2]{McLean-Thomee-IMA}
a maximum-norm error estimate of order $O(h^2 \ell_h^2) $ was shown
for smooth initial data $v \in \dot H^2$. Mustafa \cite{Mustapha:2011} studied a
semidiscrete in time and and fully discrete schemes, Crank-Nicolson in time and finite elements in
space, and derived error bounds for
smooth initial data; see, e.g. \cite[Theorem 4.3]{Mustapha:2011}.

The lack of optimal with respect to the regularity error estimates for the numerical schemes for
FPDEs with nonsmooth data is in sharp contrast with 
the finite element method (FEM) for standard parabolic problems, $ \al=1$.
Here the error analysis is complete and various optimal with respect to the regularity
of the solution estimates are available \cite{Thomee97}. The key
inngredient of the analysis is the smoothing property of the parabolic operator and its discrete
counterpart \cite[Lemmas 3.2  and 2.5]{Thomee97}.
For the FPDE \eqref{eq1}, such property has been established
recently by Sakamoto and Yamamoto \cite{Sakamoto_2011}; see Theorem \ref{thm:fdereg} below for details.

The goal of this note is to develop an error analysis with optimal with respect to the regularity
of the initial data estimates for the semidiscrete Galerkin 
and the lumped mass Galerkin FEMs for the problem \eqref{eq1} on convex polygonal domains.

Now we describe our main results.
We shall use the standard notations in the \FEM \cite{Thomee97}.
Let ${\{\T_h\}}_{0<h<1}$ be a family of regular partitions of the
domain $\Omega$ into $d$-simplexes, called finite elements,
with $h$ denoting the maximum diameter.
Throughout, we assume that the triangulation $\T_h$ is
quasi-uniform, that is the diameter 
of the inscribed disk in the finite element
$\tau \in \T_h$ is bounded from below by $h$, uniformly on $\T_h$. The approximate
solution $u_h$ will be sought in the finite element space $X_h\equiv
X_h(\Om)$ of continuous piecewise linear functions 
over the triangulation $\T_h $
\begin{equation*}
  X_h =\left\{\chi\in H^1_0(\Om): \ \chi ~~\mbox{is a linear function over}  ~~\K,  
 \,\,\,\,\forall \K \in \T_h\right\}.
\end{equation*}

The semidiscrete Galerkin FEM for the problem \eqref{eq1}
is: find $ u_h (t)\in X_h$ such that
\begin{equation}\label{fem}
\begin{split}
 {( \Dal u_{h},\chi)}+ a(u_h,\chi)&= {(f, \chi)},
\quad \forall \chi\in X_h,\ T \ge t >0,\\
u_h(0)&=v_h,
\end{split}
\end{equation}
where $a(u,w)=(\nabla u, \nabla w) ~~ \text{for}\ u, \, w\in H_0^1(\Omega)$,
and $v_h \in X_h$ is a given approximation of the initial data $v$. The choice of $v_h$
will depend on the smoothness of the initial data $v$.  Following Thom\'ee
\cite{Thomee97}, we shall take $v_h=R_hv$ in case of smooth initial data and $v_h=P_hv$
in case of nonsmooth initial data, where $R_h$ and $P_h$ are Ritz and the orthogonal
$\L2o$-projection on the finite element space $X_h$, respectively.

We shall study the convergence of the semidiscrete Galerkin FEM
\eqref{fem} for the case of initial data $ v \in \dot H^q(\Om)$, $q=0,1,2$
(for the definition of these spaces, see Section \ref{ssec:represent}). The case
$q=2$ is referred to as smooth initial data, while the case $q=0$ is known as
nonsmooth initial data.

In the past, the initial value problem for a standard parabolic
equation, i.e. $\al=1$, has been thoroughly
studied in all these cases. It is well known that, for smooth initial data,
the solution $u_h$ satisfies an error bound uniformly in $t \ge 0$
\cite[Theorem 3.1]{Thomee97}:
\begin{equation}
 \label{parabolic-smooth}
\|u_h(t) - u(t)\| + h \|\nabla (u_h(t) - u(t)) \| \le C h^2 \|v\|_2, \for \, t\geq 0.
\end{equation}
We also have a nonsmooth data error estimate, for $v$ assumed to be only in $L_2(\Om)$, but
which deteriorates for $t$ approaching $0$ \cite[Theorem 3.2]{Thomee97}, namely,
\begin{equation}
 \label{parabolic-non-smooth}
\|u_h(t) - u(t)\| + h \|\nabla (u_h(t) - u(t)) \| \le C h^2 t^{-1} \|v\|, \for \, t > 0.
\end{equation}
The proof of all these results exploits the smoothing property of the parabolic problem
via its representation through the solution operator $E(t)=e^{t\Delta }$, namely,
\begin{equation*}
u(t) = E(t)v + \int_0^t E(t-s)f(s) \,ds, ~~~~ t>0.
\end{equation*}

In this paper we establish analogous results for the semidiscrete Galerkin
FEM \eqref{fem} for the model problem \eqref{eq1}. The main difficulty
in the error analysis stems from limited smoothing properties of the FPDE, cf. Theorem \ref{thm:fdereg}.
Note that the solution operator for the FPDE
is defined through the  Mittag-Leffler function, which
decays only linearly at infinity, cf. Lemma \ref{lem:mlfbdd}, in contrast with the standard parabolic
equation whose solution decays exponentially for $t \to \infty$. The difficulty is overcome by
exploiting the mapping property of the discrete solution operators.

Our main results are as follows. Firstly, in case of smooth initial data, we derived the
same error bound \eqref{parabolic-smooth} uniformly in $t \ge 0$ (cf. Theorem
\ref{SG-bound-smooth}), as is in the case of the standard parabolic problem.
Secondly, for quasi-uniform meshes
we derived a nonsmooth data error estimate, for $v\in L_2(\Om)$ only, which
deteriorates for $t$ approaching $0$ (cf. Theorem \ref{SG-H1-norm-nonsmooth})
\begin{equation}\label{main-est}
   \|u_h(t) - u(t)\| + h \|\nabla (u_h(t) - u(t)) \|
         \le C h^2\ell_h t^{-\alpha} \|v\|, ~\ell_h=|\ln h|, ~~~t > 0. 
\end{equation}
This result is similar to the counterpart of standard parabolic
problem but derived for quasi-uniform meshes and with an additional log-factor, $\ell_h$.

Further, we study the more practical lumped mass semidiscrete Galerkin FEM.
We have shown the same rate of convergence for the case of smooth
initial data (cf. Theorem \ref{lumped-mass-smooth}), and also the almost
optimal error estimate for the gradient in
the case of data $v \in \dot H^1(\Om)$ and $v \in L_2(\Om)$
(see estimate \eqref{lumped-mass-nonsmooth}). For the case of
nonsmooth data, $v\in L_2(\Omega)$, for general quasi-uniform meshes, we were only able
to establish a suboptimal $L_2$-error bound of order $O(h \ell_h t^{-\al})$, see
\eqref{L2}.
Further, inspired by the study in \cite{chatzipa-l-thomee12}, we also
consider special meshes. Namely, for a class of special triangulations
satisfying the condition \eqref{eqn:condQ}, which holds for
meshes that are symmetric with respect to each internal
vertex \cite[Section 5]{chatzipa-l-thomee12}, we show an
almost optimal convergence estimate \eqref{L2-improved}:
$$
\| \luh(t) - u(t) \| \le C h^2 \ell_h t^{-\al} \|v\|,
$$
where $ \luh(t)$ is the solution of the lumped mass FEM. This estimate
is similar to the one derived for the lumped mass semidiscrete Galerkin method for the standard parabolic
equation \cite[Theorem 4.1]{chatzipa-l-thomee12}. 

Finally, in Theorem \ref{th:superconvergence} we
establish a superconvergence result for the postprocessed gradient of the error
in case of smooth initial data and a planar domain for special meshes. This improves
the convergence order in $H^1$-norm from $O(h)$ to $O(h^2\ell_ht^{-\frac{\al}{2}})$ for both
Galerkin and the lumped mass finite element approximation.

The paper is organized as follows. In Section \ref{sec:prelim}, we state 
basic properties of the Mittag-Leffler function, the smoothing property of the
equation \eqref{eq1}, and some basic estimates for finite element
projection operators. In Sections \ref{sec:galerkin} and \ref{sec:lumped-mass}, we derive
error estimates for the standard Galerkin FEM and lumped mass FEM, respectively. In Section
\ref{sec:special_meshes} we give a superconvergence result for the gradient of the error
in case of smooth initial data. Finally, in Section \ref{numerics} we present some numerical
tests for a number of one-dimensional examples, including
both smooth and non-smooth data. The numerical tests confirm our theoretical study.

We assume that the mesh size $h$ of the triangulation $\T_h $
satisfies $0<h<1$. Throughout we shall denote by $C$ a generic constant,
which may differ at different occurrences, but is
always independent of the mesh size $h$, the solution $u$ and the initial data $v$.

\section{Preliminaries}\label{sec:prelim}

In this section, we collect useful facts on the Mittag-Leffler function, regularity results for
the fractional diffusion equation \eqref{eq1}, and basic estimates for the finite-element projection operators.

\subsection{Mittag-Leffler function}
We shall use extensively the Mittag-Leffler function $E_{\alpha,\beta}(z)$ defined below
\begin{equation*}
  E_{\alpha,\beta}(z) = \sum_{k=0}^\infty \frac{z^k}{\Gamma(k\alpha+\beta)}\quad z\in \mathbb{C},
\end{equation*}
where $\Gamma(\cdot)$ is the standard Gamma function defined as
\begin{equation*}
    \Gamma(z) = \int_0^\infty t^{z-1}e^{-t} dt \, ~~\Re(z) >0.
\end{equation*}

The Mittag-Leffler function is a two-parameter family of entire functions of $z$ of
order $\al^{-1}$ and type $1$ \cite[pp. 42]{KilbasSrivastavaTrujillo:2006}.
The exponential function is a particular case of the Mittag-Leffler function, namely $E_{1,1}(z)=e^z$,
\cite[pp. 42]{KilbasSrivastavaTrujillo:2006}.
Two most important members of this family 
are $E_{\alpha,1}(-\lambda t^\alpha)$ and $t^{\alpha-1}E_{\alpha,\alpha}(-\lambda t^\alpha)$,
which occur in the solution operators for the initial value problem
and the nonhomogeneous problem \eqref{eq1}, respectively.
There are several important properties of
the Mittag-Leffler function $E_{\alpha,\beta}(z)$, mostly derived by
M. Djrbashian (cf. \cite[Chapter 1]{Djrbashian:1993}).
\begin{lemma}
\label{lem:mlfbdd}
Let $0<\alpha<2$ and $\beta\in\mathbb{R}$ be arbitrary, and $\frac{\alpha\pi}{2}
<\mu<\min(\pi,\alpha\pi)$. Then there exists a constant $C=C(\alpha,\beta,\mu)>0$ such
that
\begin{equation}\label{M-L-bound}
  |E_{\alpha,\beta}(z)|\leq \frac{C}{1+|z|}\quad\quad \mu\leq|\mathrm{arg}(z)|\leq \pi.
\end{equation}
Moreover, for $\lambda>0$, $\al>0$, and $t>0$ we have
\begin{equation}\label{eq:mlfdiff}
  \Dal E_{\al,1}(-\la t^\al)=-\la E_{\al,1}(-\la t^\al).
\end{equation}
\end{lemma}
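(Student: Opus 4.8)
The plan is to establish the two claims of Lemma~\ref{lem:mlfbdd} separately, since they rest on quite different ideas. The bound \eqref{M-L-bound} is an asymptotic estimate on the entire function $E_{\alpha,\beta}$ in the sector $\mu\le|\arg(z)|\le\pi$ away from the positive real axis, while the identity \eqref{eq:mlfdiff} is a direct termwise computation with the Caputo derivative. I would treat \eqref{eq:mlfdiff} first as a warm-up, then devote most effort to \eqref{M-L-bound}.

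For the differentiation identity \eqref{eq:mlfdiff}, I would start from the power-series definition, writing $E_{\al,1}(-\la t^\al)=\sum_{k=0}^\infty \frac{(-\la)^k t^{\al k}}{\Gamma(\al k+1)}$, and apply $\Dal$ term by term. The crucial computation is the action of the Caputo derivative on a single power $t^{\al k}$. Using the definition of $\Dal$ together with the Beta-integral evaluation of $\frac{1}{\Gamma(1-\al)}\int_0^t (t-\tau)^{-\al}\frac{d}{d\tau}\tau^{\al k}\,d\tau$, one finds $\Dal t^{\al k}=\frac{\Gamma(\al k+1)}{\Gamma(\al k+1-\al)}t^{\al(k-1)}$ for $k\ge 1$, while the $k=0$ constant term is annihilated. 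Substituting this back, reindexing the sum by $j=k-1$, and recognizing that the resulting series is exactly $-\la\,E_{\al,1}(-\la t^\al)$ gives the claim. The only technical point to check is that termwise differentiation is justified, which follows from the uniform convergence of the series on compact sets, guaranteed by the entirety of the Mittag-Leffler function.

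The bound \eqref{M-L-bound} is the substantive part, and here I would invoke the classical integral representation of $E_{\alpha,\beta}$ due to Djrbashian (cited in the excerpt as \cite[Chapter 1]{Djrbashian:1993}). The idea is to express $E_{\alpha,\beta}(z)$ as a contour integral of the form $\frac{1}{2\pi i}\int_{\Ga} \frac{\zeta^{(1-\beta)/\alpha}e^{\zeta^{1/\alpha}}}{\zeta-z}\,d\zeta$ over a Hankel-type contour $\Ga$ consisting of two rays with argument $\pm\mu$ together with a circular arc, chosen so that $z$ lies to the right of $\Ga$ when $\mu\le|\arg z|\le\pi$. For such $z$ the pole at $\zeta=z$ is not enclosed, so no residue term appears and one is left to estimate the integral directly. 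On the rays the exponential factor $e^{\zeta^{1/\alpha}}$ decays because $\cos(\mu/\alpha)<0$ under the hypothesis $\tfrac{\alpha\pi}{2}<\mu$, which keeps the integral absolutely convergent, and for $|z|$ large the denominator $|\zeta-z|$ contributes a factor comparable to $|z|^{-1}$. Combining these yields a bound of the form $\frac{C}{1+|z|}$ uniformly in the sector.

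The main obstacle is the careful bookkeeping in the contour estimate: one must split $\Ga$ into its rays and arc, verify that the choice of $\mu$ with $\tfrac{\alpha\pi}{2}<\mu<\min(\pi,\alpha\pi)$ simultaneously makes the exponential integrable along the rays \emph{and} keeps $z$ outside the contour so that no residue is picked up, and then show that the $|z|^{-1}$ decay is genuinely uniform rather than merely pointwise. Handling the transition region where $|z|$ is of order one — so that the "$1+$" in the denominator is what matters — requires noting only that $E_{\alpha,\beta}$ is entire and hence bounded on compact subsets of the sector, which patches the large-$|z|$ estimate to a global one. Since this is a known result of Djrbashian, I would either quote it directly or reproduce only the contour estimate, rather than redeveloping the full asymptotic theory.
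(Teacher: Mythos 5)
Your proposal is mathematically sound, but it takes a genuinely different route from the paper: the paper does not prove either claim at all, it simply cites the literature --- \eqref{M-L-bound} is quoted from Kilbas--Srivastava--Trujillo (eq.\ (1.8.28)) or Podlubny (Theorem 1.4), and \eqref{eq:mlfdiff} from Kilbas--Srivastava--Trujillo (Lemma 2.33). What you have done is reconstruct the standard proofs that live inside those references: the termwise Caputo differentiation $\partial_t^\alpha t^{\alpha k}=\frac{\Gamma(\alpha k+1)}{\Gamma(\alpha(k-1)+1)}t^{\alpha(k-1)}$ with reindexing is exactly how \eqref{eq:mlfdiff} is verified, and the Hankel-contour representation with the exponential decay coming from $\cos(\mu/\alpha)<0$ is exactly Podlubny's proof of the sector bound. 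Your sketch buys self-containedness at the cost of length; the paper's citation buys brevity, which is reasonable since these are classical facts and the lemma is only infrastructure for the error analysis. One refinement you should make if you write the contour argument out: do not put the rays of $\Gamma$ at argument exactly $\pm\mu$, because for $|\arg z|=\mu$ the point $z$ then lies on (or arbitrarily close to) the contour and the estimate $|\zeta-z|^{-1}\le C/|z|$ fails; instead take the contour angle $\omega$ with $\frac{\alpha\pi}{2}<\omega<\mu$, which keeps $e^{\zeta^{1/\alpha}}$ integrable along the rays and guarantees $|\zeta-z|\ge c(\mu-\omega)\,|z|$ uniformly over the sector $\mu\le|\arg z|\le\pi$. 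Similarly, in justifying the termwise differentiation, note that the differentiated series converges uniformly only on compact subsets of $(0,t]$ (its first term behaves like $\tau^{\alpha-1}$ near $\tau=0$), so the interchange should be closed by absolute integrability of the kernel $(t-\tau)^{-\alpha}\tau^{\alpha-1}$ (a Beta integral) rather than by uniform convergence on $[0,t]$ alone.
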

\begin{proof}
 The estimate \eqref{M-L-bound} can be found in  \cite[pp. 43, equation
(1.8.28)]{KilbasSrivastavaTrujillo:2006} or \cite[Theorem 1.4]{Podlubny_book}, while
 \eqref{eq:mlfdiff} is discussed in \cite[Lemma 2.33, equation
(2.4.58)]{KilbasSrivastavaTrujillo:2006}.
\hfill
\end{proof}

\subsection{Solution representation}\label{ssec:represent}

To discuss the regularity of the solution of \eqref{eq1},
we shall need some notation. For $q\ge0$, we denote by $\dot H^q(\Om) \subset
L_2(\Om)$ the Hilbert space induced by the norm
\[
|v|_q^2=\sum_{j=1}^\infty\la_j^q(v,\fy_j)^2, 
\]
with $(\cdot,\cdot)$ denoting the inner product in $L_2(\Omega)$ and
$\{\la_j\}_{j=1}^\infty$ and $\{\fy_j\}_{j=1}^\infty$ being respectively the eigenvalues and
eigenfunctions of $-\De$ with homogeneous Dirichlet boundary data
 on $\partial\Om$. The set $\{\fy_j\}_{j=1}^\infty$
forms an orthonormal basis in $L_2(\Omega)$. Thus $|v|_0=\|v\|=(v,v)^{1/2}$ is the norm
in $L_2(\Om)$, $|v|_1$ the norm in $H_0^1=H_0^1(\Om)$ and $|v|_2=\|\De v\|$ is
equivalent to the norm in $H^2(\Om)$ when $v=0$ on $\partial\Om$ \cite{Thomee97}. We
set $\dot H^{-q}=(\dot H^{q})'$, the set of all bounded linear functionals on the
space $\dot H^q$.

Now we give a representation of the solution of problem \eqref{eq1} using
the Dirichlet eigenpairs $\{(\la_j,\fy_j)\}$.
First, we introduce the operator $E(t)$:
\begin{equation}\label{E-oper}
 E(t)v=\sum_{j=1}^\infty \Mitaga1(-\la_j t^\al) \, (v, \fy_j) \, \fy_j(x).
\end{equation}
This is the solution operator
to problem \eqref{eq1} with a homogeneous right hand side, so that for $f(x,t) \equiv 0$
we have $u(t)=E(t)v$. This representation follows from an
eigenfunction expansion and \eqref{eq:mlfdiff}
\cite{Sakamoto_2011}. Further, for the non-homogeneous equation with a homogeneous
initial data $v\equiv0$, we shall use the operator defined for $\chi \in L^2(\Omega)$ as
\begin{equation}\label{Duhamel}
{\bar E}(t) \chi = \sum_{j=0}^\infty t^{\al-1} \Mitagaa(-\la_j t^\al)\,(\chi,\fy_j)\, \fy_j(x).
\end{equation}
The operators $E(t) $ and $ {\bar E}(t) $ are used  to represent the solution $u(x,t)$ of \eqref{eq1}:
$$
u(x,t)=E(t)v + \int_0^t  {\bar E}(t-s) f(s) ds.
$$
It was shown in \cite[Theorem 2.2]{Sakamoto_2011} that if
$f(x,t) \in L_2((0,T);L_2(\Om))$, then there is a unique solution
$u(x,t) \in L_2((0,T); \dot H^2(\Om))$.

For the solution of the homogeneous equation \eqref{eq1}, which is the object of our
study,  we have the following stability and smoothing estimates, essentially
established in \cite[Theorem 2.1]{Sakamoto_2011}, and slightly extended in the theorem
below. Since these estimates will play a key role in the error analysis of the
FEM approximations, we give some simple hints of the proof.
\begin{theorem}\label{thm:fdereg}
The solution $u(t)=E(t)v$ to problem \eqref{eq1} with $f\equiv 0$
satisfies the following estimates
\begin{equation}\label{DE_smoothing}
 |(\Dal)^\ell u(t) |_p \le Ct^{-\al(\ell+\frac{p-q}{2})}|v|_q, 
\quad t>0,
\end{equation}
where for $\ell=0$, $ 0 \le q \le p \le 2$ 
and for $\ell=1$, $0 \le p \le q \le 2$ and $q \le p+2$.
\end{theorem}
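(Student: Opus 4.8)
The plan is to prove the smoothing estimate \eqref{DE_smoothing} directly from the spectral representation \eqref{E-oper} of the solution operator $E(t)$, combined with the decay bound \eqref{M-L-bound} for the Mittag-Leffler function from Lemma \ref{lem:mlfbdd}. The key observation is that all the relevant norms $|\cdot|_p$ are defined through the eigenfunction expansion, so the estimate reduces to a pointwise (in $j$) bound on the Mittag-Leffler multipliers, after which one sums against the weights $\lambda_j^q (v,\varphi_j)^2$.

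Let me think about the structure carefully. The solution is $u(t) = \sum_j E_{\alpha,1}(-\lambda_j t^\alpha)(v,\varphi_j)\varphi_j$. For $\ell=1$, differentiation is trivial: by \eqref{eq:mlfdiff} we have $\Dal u(t) = -\sum_j \lambda_j E_{\alpha,1}(-\lambda_j t^\alpha)(v,\varphi_j)\varphi_j$, so each Fourier coefficient of $(\Dal)^\ell u(t)$ carries a factor $(-\lambda_j)^\ell E_{\alpha,1}(-\lambda_j t^\alpha)$. Applying the $|\cdot|_p$ norm and expanding gives $|(\Dal)^\ell u(t)|_p^2 = \sum_j \lambda_j^{p+2\ell}\,|E_{\alpha,1}(-\lambda_j t^\alpha)|^2\,(v,\varphi_j)^2$.

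**Next I would** insert the bound \eqref{M-L-bound}. Since $\lambda_j>0$ and $t>0$, the argument $-\lambda_j t^\alpha$ is a negative real, so $|\mathrm{arg}(z)|=\pi$ lies in the admissible sector and $|E_{\alpha,1}(-\lambda_j t^\alpha)| \le C/(1+\lambda_j t^\alpha) \le C (\lambda_j t^\alpha)^{-\delta}$ for any $\delta\in[0,1]$ (interpolating between the uniform bound and the decay). The goal is to extract exactly the powers that produce the claimed exponent. We have the factor $\lambda_j^{p+2\ell}\,|E_{\alpha,1}(-\lambda_j t^\alpha)|^2$, which we want to dominate by $C t^{-2\alpha(\ell+\frac{p-q}{2})}\,\lambda_j^{q}$, so that the residual sum $\sum_j \lambda_j^q (v,\varphi_j)^2 = |v|_q^2$ closes the estimate. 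Factoring out $\lambda_j^q$, this requires bounding $\lambda_j^{p+2\ell-q}\,|E_{\alpha,1}(-\lambda_j t^\alpha)|^2 \le C t^{-2\alpha(\ell+\frac{p-q}{2})}$, i.e. $\lambda_j^{p+2\ell-q}(1+\lambda_j t^\alpha)^{-2} \le C t^{-\alpha(2\ell+p-q)}$. Setting $s = \lambda_j t^\alpha$, the left side is $t^{-\alpha(p+2\ell-q)} s^{p+2\ell-q}(1+s)^{-2}$, and since $0\le p+2\ell-q\le 2$ under both hypothesis cases (for $\ell=0$, $0\le p-q$ and $p\le 2$ gives $p-q\le 2$; for $\ell=1$, $q\le p+2$ gives $p+2-q\ge 0$ and $p\le q$ gives $p+2-q\le 2$), the function $s\mapsto s^{p+2\ell-q}/(1+s)^2$ is bounded on $[0,\infty)$. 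This yields exactly $t^{-\alpha(2\ell+p-q)}$, matching the squared exponent.

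**The main obstacle** is bookkeeping the two parameter regimes so that the exponent $p+2\ell-q$ always lands in $[0,2]$, which is precisely what guarantees boundedness of $s^{p+2\ell-q}/(1+s)^2$; I would verify each of the stated constraint sets ($\ell=0$, $0\le q\le p\le 2$ versus $\ell=1$, $0\le p\le q\le 2$, $q\le p+2$) translates into $0\le p+2\ell-q\le 2$, since a negative exponent would make the quotient blow up as $s\to 0$ (small $\lambda_j$) and an exponent exceeding $2$ would make it unbounded as $s\to\infty$ (large $\lambda_j$). One technical point worth care is that for $\ell=1$ the differentiation formula \eqref{eq:mlfdiff} must be justified termwise and the resulting series shown to converge in $\dot H^p$; this follows from the same estimate since the tail is controlled. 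I expect the extension beyond the original \cite{Sakamoto_2011} statement (the $\ell=1$ case and the full range of $p,q$) to be the part requiring the above careful interpolation rather than any genuinely new idea.
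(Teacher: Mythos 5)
Your proposal is correct, and for the $\ell=1$ case it coincides with the paper's own argument: the paper expands $\Dal u(t)$ in the eigenbasis, inserts the bound \eqref{M-L-bound}, and uses boundedness of $s^{2+p-q}/(1+s)^2$ on $[0,\infty)$ for $p\le q\le p+2$ --- exactly your computation. Where you genuinely differ is the $\ell=0$ case. The paper does not argue spectrally there: it imports the endpoint estimates \eqref{basicest} ($q=0,2$) and the $L_2$-stability \eqref{case:p0q0} from \cite[Theorem 2.1]{Sakamoto_2011}, and then covers the full range $0\le q\le p\le 2$ by interpolating between these. You instead run the same spectral argument as for $\ell=1$, checking that the exponent $p+2\ell-q$ lies in $[0,2]$ under each stated constraint set, so that $s^{p+2\ell-q}/(1+s)^2$ is bounded and the remaining sum $\sum_j\lambda_j^q(v,\varphi_j)^2=|v|_q^2$ closes the estimate; this bookkeeping is verified correctly in your write-up. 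Your route buys self-containedness and uniformity --- a single mechanism handles both $\ell=0$ and $\ell=1$, and it is in fact how the paper proves the discrete analogue, Lemma \ref{lemma-reg} --- at the cost of re-deriving what \cite{Sakamoto_2011} already supplies; the paper's route is shorter on the page but leans on the cited result plus an interpolation step. One point you flag explicitly --- termwise justification of \eqref{eq:mlfdiff} applied to the series \eqref{E-oper} and convergence of the differentiated series in $\dot H^p$ --- is passed over silently in the paper, so your version is, if anything, slightly more careful on that technicality.
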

\begin{proof}
First we discuss the case $\ell=0$. According to parts (i) and (iii) of
\cite[Theorem 2.1]{Sakamoto_2011}, we have
\begin{equation}\label{basicest}
  |u(t)|_2 + \|\Dal u(t)\| \leq Ct^{-\al(1-\frac{q}{2})}|v|_q,\quad q=0,2.
\end{equation}
By means of interpolation of estimates \eqref{basicest} for $q=0$ and $q=2$, we get the
desired  estimate \eqref{DE_smoothing} for the the case $p=2$, $0 \le q \le 2$.
%

Further, applying part (i) of
\cite[Theorem 2.1]{Sakamoto_2011}, we have
\begin{equation}\label{case:p0q0}
   \|u(t)\|\leq C\|v\|.
\end{equation}
Thus, interpolation of \eqref{basicest} for $q=2$ and  \eqref{case:p0q0} yields
\eqref{DE_smoothing} for $0 \le p=q \le2$.
The remaining cases, $0 \le q< p <2$, follow from the interpolation of \eqref{basicest}
with $q=0$ and \eqref{case:p0q0}. This shows the assertion for $\ell=0$.

Now we consider the case $\ell =1$. It follows from the representation formula
\eqref{E-oper} and Lemma \ref{lem:mlfbdd} that
\begin{equation*}
  \begin{aligned}
    |\Dal u(t)|_p^2 & = \sum_{j=1}^\infty \la_j^{2+p}\Mitaga1(-\la_j t^\al)^2 \, (v, \fy_j)^2\\
         & = t^{-\al(2+p-q)}\sum_{j=1}^\infty (\la_jt^\al)^{2+p-q}\Mitaga1(-\la_jt^\al)^2\la_j^q(v,\fy_j)^2\\
         & \leq C t^{-\al(2+p-q)}\sum_{j=1}^\infty \frac{(\la_jt^\al)^{2+p-q}}{(1+\la_jt^\al)^2}\la_j^q(v,\fy_j)^2\\
         & \leq Ct^{-\al(2+p-q)}\sup_{j\in\mathbb{N}}\frac{(\la_jt^\al)^{2+p-q}}{(1+\la_jt^\al)^2}\sum_{j=1}^\infty\la_j^q(v,\fy_j)^2
         \leq Ct^{-\al(2+p-q)}|v|_q^2,
  \end{aligned}
\end{equation*}
where we have used the fact that, in view of Young's inequality,
\begin{equation*}
~\sup_{j\in\mathbb{N}}\frac{ (\la_jt^\al)^{2+p-q}}{(1+\la_jt^\al)^2} \leq C \quad \mbox{for} \quad p\leq q\leq p+2.
\end{equation*}
Thus, we get
\begin{equation*}
  |\Dal u(t)|_p \leq Ct^{-\al(1+\frac{p-q}{2})}|v|_q.
\end{equation*}
This completes the proof of the theorem.
\hfill
\end{proof}
\begin{remark}
 Note that for $\ell=1$ we have the restriction $p\le q$, which is not present
in the similar result for the standard parabolic problem, see, e.g. \cite[Lemma
3.2]{Thomee97}. This reflects the fact that FPDE has
limited smoothing properties. The limited smoothing is also valid for the semidiscrete
Galerkin approximation $($see Lemma \ref{lemma-reg}$)$, which will influence the error
estimates for the finite element solution.
\end{remark}

\subsection{Properties of Ritz and $L_2$-projections on $X_h$}
In our analysis we shall also use the orthogonal $L_2$-projection $P_h:L_2(\Omega)\to X_h$ and
the Ritz projection $R_h:H^1_0(\Omega)\to X_h$ defined by
\begin{equation}\label{2.ritz}
  \begin{aligned}
    (P_h \psi,\chi)=(\psi,\chi)& \quad\forall \chi\in X_h,\\
    (\nabla R_h \psi,\nabla\chi)=(\nabla \psi,\nabla\chi)& \quad \forall \chi\in X_h,
  \end{aligned}
\end{equation}
respectively. It is well-known that the operators $P_h$ and $R_h$ have the following
approximation properties.

\begin{lemma}\label{lem:prh-bound}
The operators $P_h$ and $R_h$ satisfy
\begin{equation}\label{ph-bound}
  \|P_h \psi-\psi\|+h\|\nabla(P_h \psi-\psi)\|\le Ch^q| \psi|_q, \for \psi\in \dot H^q,\ q=1,2.
\end{equation}
\begin{equation}\label{rh-bound}
   \|R_h \psi-\psi\|+h\|\nabla(R_h \psi-\psi)\|\le Ch^q| \psi|_q, \for \psi\in \dot H^q,\ q=1,2.
\end{equation}
In particular, \eqref{ph-bound} indicates that $P_h$ is stable in $\dot H^1 $.  
\hfill
\end{lemma}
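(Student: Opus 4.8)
The plan is to treat the two operators separately, in each case combining their respective best-approximation properties with a standard interpolation estimate. Concretely, I would rely on the bound $\|I_h\psi-\psi\|+h\|\nabla(I_h\psi-\psi)\|\le Ch^q|\psi|_q$ for a suitable interpolation operator $I_h$ onto $X_h$ and $\psi\in\dot H^q$, $q=1,2$: the nodal Lagrange interpolant when $\psi\in\dot H^2$ (which embeds into $C(\overline\Om)$ for $d\le3$, so nodal values make sense), and a Cl\'ement- or Scott--Zhang-type quasi-interpolant when $\psi\in\dot H^1$, where point values are unavailable in dimensions $d=2,3$.

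First I would handle the Ritz projection. From the defining relation \eqref{2.ritz}, $R_h$ is the orthogonal projection onto $X_h$ with respect to the energy inner product $a(\cdot,\cdot)=(\nabla\cdot,\nabla\cdot)$, so $\|\nabla(R_h\psi-\psi)\|=\inf_{\chi\in X_h}\|\nabla(\chi-\psi)\|\le\|\nabla(I_h\psi-\psi)\|\le Ch^{q-1}|\psi|_q$, which is the gradient part of \eqref{rh-bound}. For the $L_2$ part I would run the Aubin--Nitsche duality argument: given $g\in L_2(\Om)$, let $w\in\dot H^2(\Om)$ solve $-\De w=g$ with homogeneous Dirichlet data, which is legitimate because $\Om$ is a convex polygon and hence enjoys full elliptic regularity $|w|_2\le C\|g\|$. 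Using the Galerkin orthogonality $(\nabla(R_h\psi-\psi),\nabla\chi)=0$ for all $\chi\in X_h$, one writes $(R_h\psi-\psi,g)=(\nabla(R_h\psi-\psi),\nabla(w-R_hw))\le\|\nabla(R_h\psi-\psi)\|\,\|\nabla(w-R_hw)\|\le Ch^{q-1}|\psi|_q\cdot Ch|w|_2\le Ch^{q}|\psi|_q\|g\|$; taking the supremum over $g\in L_2(\Om)$ gives $\|R_h\psi-\psi\|\le Ch^q|\psi|_q$.

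For the $L_2$-projection, the $L_2$ estimate is immediate from its best-approximation property, $\|P_h\psi-\psi\|=\inf_{\chi\in X_h}\|\chi-\psi\|\le\|I_h\psi-\psi\|\le Ch^q|\psi|_q$. The gradient estimate is the only place where I expect genuine work, and it is precisely where the quasi-uniformity hypothesis enters. I would split $\nabla(P_h\psi-\psi)=\nabla(P_h\psi-I_h\psi)+\nabla(I_h\psi-\psi)$, bound the second term by interpolation, and control the first using the inverse inequality $\|\nabla\chi\|\le Ch^{-1}\|\chi\|$ (valid on quasi-uniform meshes) applied to $\chi=P_h\psi-I_h\psi\in X_h$; then $\|P_h\psi-I_h\psi\|\le\|P_h\psi-\psi\|+\|\psi-I_h\psi\|\le Ch^q|\psi|_q$ feeds back through the factor $h^{-1}$ to yield $Ch^{q-1}|\psi|_q$. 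The claimed $\dot H^1$-stability of $P_h$ is then the special case $q=1$ of this bound.

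The main obstacle is not any individual estimate but the discipline of invoking the two structural hypotheses exactly where they are needed: convexity of $\Om$ supplies the $\dot H^2$ elliptic regularity that drives the duality argument for $R_h$, while quasi-uniformity of $\T_h$ supplies the inverse inequality that tames the gradient of $P_h$. Once these are in place, the remainder reduces to the standard interpolation and best-approximation bounds.
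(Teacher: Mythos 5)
Your proposal is correct, but it is far more detailed than what the paper actually does: the paper's ``proof'' of this lemma is essentially a citation, referring to \cite[Lemma 1.1]{Thomee97} and \cite[Theorems 3.16, 3.18]{ern-guermond} for the estimates \eqref{rh-bound} (and implicitly \eqref{ph-bound}), and then observing in one line that the $\dot H^1$-stability of $P_h$ follows from the error bound \eqref{ph-bound} together with the inverse inequality on quasi-uniform meshes, with a caveat (via \cite{Crouzeix-Th}) that for general meshes this stability needs mild mesh assumptions. What you have written out is precisely the standard machinery hiding behind those citations: C\'ea's lemma in the energy norm (using that $R_h$ defined by \eqref{2.ritz} is the $a(\cdot,\cdot)$-orthogonal projection) plus the Aubin--Nitsche duality argument for the $L_2$ bound on $R_h\psi-\psi$, where convexity of $\Om$ supplies the regularity $|w|_2\le C\|g\|$ of the dual problem; and for $P_h$, best approximation in $L_2$ plus the splitting through an interpolant combined with the inverse inequality for the gradient bound. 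Your last step is exactly the paper's own mechanism for the $\dot H^1$-stability claim, so the two arguments agree where the paper is explicit. Your care about which interpolant to use (nodal Lagrange for $\dot H^2$, Scott--Zhang/Cl\'ement for $\dot H^1$, since point values are meaningless for $H^1$ functions in $d=2,3$) and about where convexity versus quasi-uniformity enters is sound and goes beyond the paper's exposition; the only implicit ingredient you might have flagged is the equivalence of $|\psi|_2=\|\De\psi\|$ with the full $H^2$-norm on $\dot H^2$ (stated by the paper in Section \ref{ssec:represent}, and again a consequence of convexity), which is needed to pass from the textbook interpolation bounds in $\|\cdot\|_{H^2}$ to bounds in $|\cdot|_2$.
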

\begin{proof}
The estimates \eqref{rh-bound} are well known, cf. e.g. \cite[Lemma 1.1]{Thomee97} or
\cite[Theorem 3.16 and Theorem 3.18]{ern-guermond}. For globally uniform meshes,
the case considered in this paper, the $\dot H^1$ stability of $P_h$
directly follows from the error bound \eqref{ph-bound} and the inverse inequality.
However, for more general meshes such stability is valid only under some mild
assumptions on the mesh; see, e.g. \cite{Crouzeix-Th}.
\hfill
\end{proof}

\section{Semidiscrete Galerkin FEM}\label{sec:galerkin}

In this section we derive error estimates for the standard semidiscrete Galerkin FEM.
First we recall some basic known facts for the spatially semidiscrete standard Galerkin FEM.
We begin with the smoothing properties of the solution operators for the semidiscrete
method as well as other preliminary results needed in the sequel. The error estimates
hinge crucially on the smoothing properties of the discrete operator $\Etilh$, cf.
\eqref{E-tilde}.

\subsection{Semidiscrete Galerkin FEM and its properties}
Upon introducing the discrete Laplacian $\De_h: X_h\to X_h$ defined by
\begin{equation*}
  -(\De_h\psi,\chi)=(\nabla\psi,\nabla\chi)\quad\forall\psi,\,\chi\in X_h,
\end{equation*}
and $f_h= P_h f$ we may write the spatially discrete problem \eqref{fem} as
\begin{equation}\label{fem-operator}
   \Dal u_{h}(t)-\De_h u_h(t) =f_h(t) \for t\ge0 \quad \mbox{with} \quad  u_h(0)=v_h.
\end{equation}
Now we give a representation of the solution of \eqref{fem-operator} using the
eigenvalues and eigenfunctions
$\{\la^h_j\}_{j=1}^{\NN}$ and $\{\fy_j^h\}_{j=1}^{\NN}$ of the discrete Laplacian
$-\De_h$. First we introduce the discrete analogues of \eqref{E-oper} and \eqref{Duhamel} for $t>0 $:
\begin{equation}\label{Eh}
E_h(t)v_h=\sum_{j=1}^{\NN} \Mitaga1(-\la^h_jt^\al)(v_h,\fy_j^h)\fy_j^h
\end{equation}
and
\begin{equation}\label{E-tilde}
  \Etilh(t) f_h= \sum_{j=1}^\NN t^{\al-1} \Mitagaa(-\la^h_jt^\al)\,(f_h,\fy^h_j) \, \fy_j^h.
\end{equation}
Then the solution $u_h(x,t)$ of the discrete problem \eqref{fem-operator} can be expressed by:
\begin{equation}\label{Duhamel_o}
     u_h(x,t)= E_h(t) v_h + \int_0^t \Etilh(t-s) f_h(s)\,ds.
\end{equation}

Also, on the finite element space $X_h$, we introduce
the discrete norm $\tribar \cdot\tribar_{p}$ for any
$p\in\mathbb{R}$ defined by
\begin{equation}\label{eqn:normhp}
  \tribar \psi\tribar_{p}^2 = 
      \sum_{j=1}^N(\la_j^h)^p(\psi,\fy_j^h)^2\quad \psi\in X_h.
\end{equation}
Since we are dealing with finite dimensional spaces, the above norm is well defined
for all real $ p$. From the very definition of the discrete Laplacian $-\Delta_h$ we have
$\tribar \psi\tribar_{1}=|\psi|_1$ and also
$\tribar \psi\tribar_{0}=\|\psi\|$ for any $\psi\in X_h$.
So there will be no confusion in using $|\psi|_p$ instead of $\tribar \psi
\tribar_{p}$ for $p=0,1$ and  $\psi \in X_h $.

To analyze the convergence of the semidiscrete Galerkin method, we shall
need various smoothing properties of the operator $E_h(t)$, which are discrete analogues of those
formulated in \eqref{DE_smoothing}.
The estimates will be used for analyzing the convergence of
the lumped mass FEM in Section \ref{sec:lumped-mass}.

\begin{lemma}\label{lemma-reg}
Let $E_h(t)$ be defined by \eqref{Eh} and $v_h \in X_h$. Then
\begin{equation}\label{DE_smoothing_ht}
    \tribar (\Dal)^\ell u_h(t) \tribar_{p}
       = \tribar (\Dal)^\ell E_h(t)v_h\tribar_{p}
        \le Ct^{ -\al(\ell + \frac{p-q}{2})}\tribar v_h\tribar_{q}, \quad t >0,
\end{equation}
where for $\ell=0$, $q \le p$ and $0\leq p-q\leq 2$ and for $\ell=1$, $p \le q \le p+2$.
\end{lemma}
\begin{proof}
First, consider the case  $\ell=0$. Then using the representation \eqref{Eh} of the
solution $u_h(t)$ and the bound for the Mittag-Leffler function $E_{\al,\beta}(z)$ in
Lemma \ref{lem:mlfbdd} we get for $q\le p$
\begin{equation*}
 \begin{aligned}
    \tribar u_h(t) \tribar^2_{p} & = \sum_{j=1}^N (\la_j^h)^p|(u_h(t),\fy_j^h)|^2
        = \sum_{j=1}^N (\la_j^h)^p |E_{\alpha,1}(-\lambda_j^ht^\alpha)|^2 |(v_h,\fy_j^h)|^2\\
        & \le C t^{-\al (p-q)} \sum_{j=1}^N \frac{(\la_j^h t^\alpha )^{p-q}}{(1+\la_j^ht^\alpha)^2}(\la_j^h)^q (v_h,\fy_j^h)^2\\
        & \le Ct^{-\al(p-q)} \sum_{j=1}^N (\la_j^h)^q|(v_h,\fy_j^h)|^2  =Ct^{-\al(p-q)} \tribar v_h\tribar_{q}^2.
  \end{aligned}
\end{equation*}
Here in the last inequality we have used the fact that 
for $q\leq p$ and $ p \leq q\leq p+2$ the expression
$ 
   \max_j (\la_j^h t^\alpha)^{p-q}/(1+\la_j^ht^\alpha)^2
$ 
is bounded.

The estimates for the case $\ell=1$ are obtained analogously using the representation
\eqref{Eh} of the solution $u_h(t)$ for $ p \leq q\leq p+2$ and Lemma \ref{lem:mlfbdd}:
\begin{equation*}
 \begin{aligned}
    \tribar \partial_t^\al u_h(t)\tribar_{p}^2  & = \sum_{j=1}^N (\la_j^h)^p|(\partial_t^\al u_h(t),\fy_j^h)|^2 \\
        & = \sum_{j=1}^N (\la_j^h)^{2+p} |E_{\al,1}(-\la_j^ht^\al)|^2 |(v_h,\fy_j^h)|^2.
 \end{aligned}
\end{equation*}
Now using the bound of Mittag-Leffler function in Lemma \ref{lem:mlfbdd} and Young's inequality, we obtain
\begin{equation*}
  \begin{aligned}
     \tribar \partial_t^\al u_h(t)\tribar_{p}^2   & \le C t^{-(2\al+ \al(p-q))}
         \sum_{j=1}^N \frac{(\la_j^h t^\al)^{2+p-q}}{(1+\la_j^ht^\al)^2} (\la_j^h)^q |(v_h,\fy_j^h)|^2\\
        &\le C t^{-(2\al+ \al(p-q))} \sum_{j=1}^N (\la_j^h)^q|(v_h,\fy_j^h)|^2\\
        & =C t^{-(2\al+ \al(p-q))} \tribar v_h\tribar_{q}^2.
  \end{aligned}
\end{equation*}
The desired estimate follows from this immediately.
\hfill
\end{proof}

The following estimates are crucial for the a priori error analysis in the sequel.
\begin{lemma}\label{lemma-reg-f}
Let $\Etilh$ be defined by \eqref{E-tilde} and $ \psi \in X_h$. Then we have for all $t
>0$,
\begin{equation}\label{DE_smoothing_h}
 \tribar \Etilh(t) \psi \tribar_{p} \le \left \{
\begin{array}{ll}
  Ct^{ -1 + \al(1 + \frac{q -p}{2})}\tribar \psi \tribar_{q}, & \quad p-2\leq q \le p, \\[1.3ex]
  Ct^{ -1 + \alpha }\tribar \psi \tribar_{q},  & \quad p< q.
\end{array}
\right .
\end{equation}
\end{lemma}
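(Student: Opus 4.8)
The plan is to mimic the spectral arguments already used in Lemma \ref{lemma-reg} and Theorem \ref{thm:fdereg}, now with the Mittag-Leffler function $\Mitagaa$ (the case $\beta=\al$) in place of $\Mitaga1$. First I would expand $\Etilh(t)\psi$ in the discrete eigenbasis via the representation \eqref{E-tilde}, so that
\[
\tribar \Etilh(t)\psi\tribar_p^2 = t^{2\al-2}\sum_{j=1}^\NN (\la_j^h)^p\,|\Mitagaa(-\la_j^h t^\al)|^2\,(\psi,\fy_j^h)^2 ,
\]
and then insert the bound $|\Mitagaa(-\la_j^h t^\al)|\le C/(1+\la_j^h t^\al)$ from \eqref{M-L-bound} of Lemma \ref{lem:mlfbdd} (applicable since $\arg(-\la_j^h t^\al)=\pi$ lies in the admissible sector). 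Writing $(\la_j^h)^p=(\la_j^h)^{p-q}(\la_j^h)^q$ and pulling out a factor $t^{-\al(p-q)}$, the estimate takes the form
\[
\tribar \Etilh(t)\psi\tribar_p^2 \le C\, t^{2\al-2-\al(p-q)}\sum_{j=1}^\NN \frac{(\la_j^h t^\al)^{p-q}}{(1+\la_j^h t^\al)^2}\,(\la_j^h)^q (\psi,\fy_j^h)^2 .
\]

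For the first regime, $p-2\le q\le p$, I would estimate the sum by extracting the supremum $\sup_{x\ge0} x^{p-q}/(1+x)^2$ with $x=\la_j^h t^\al$. Since $0\le p-q\le 2$, this supremum is finite (the same elementary fact, provable via Young's inequality, that is invoked in the proof of Theorem \ref{thm:fdereg}), which leaves precisely $C\,t^{2\al-2-\al(p-q)}\tribar\psi\tribar_q^2$; taking square roots yields the stated exponent $-1+\al\bigl(1+\tfrac{q-p}{2}\bigr)$.

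The main obstacle is the second regime, $p<q$, where this direct argument breaks down: with $p-q<0$ the function $x^{p-q}/(1+x)^2$ is unbounded as $x\to0$, and since $x=\la_j^h t^\al$ can be made arbitrarily small as $t\to0$, the supremum trick no longer applies. To circumvent this I would instead apply the already-established first case with $q$ replaced by $p$ (legitimate, as $p-2\le p\le p$), obtaining $\tribar\Etilh(t)\psi\tribar_p\le C\,t^{-1+\al}\tribar\psi\tribar_p$, and then invoke monotonicity of the discrete norms: for $p<q$ and $\psi\in X_h$ one has $\tribar\psi\tribar_p\le C\tribar\psi\tribar_q$. Indeed, $(\la_j^h)^{p-q}\le(\la_1^h)^{p-q}$ because $x\mapsto x^{p-q}$ is decreasing, and the discrete eigenvalues are bounded below, $\la_1^h\ge\la_1>0$, by the min-max principle together with $X_h\subset H_0^1(\Om)$, so $(\la_1^h)^{p-q}$ is a fixed constant. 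Chaining the two bounds gives $\tribar\Etilh(t)\psi\tribar_p\le C\,t^{-1+\al}\tribar\psi\tribar_q$, completing the second case.
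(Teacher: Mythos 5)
Your proposal is correct and follows essentially the same route as the paper: the first regime is handled by the identical spectral expansion, Mittag-Leffler bound, and the boundedness of $x^{p-q}/(1+x)^2$ for $0\le p-q\le 2$. For the regime $p<q$ the paper simply remarks that the claim follows because the eigenvalues $\{\la_j^h\}$ are bounded away from zero uniformly in $h$; your two-step argument (first case with $q=p$, then the norm monotonicity $\tribar\psi\tribar_p\le C\tribar\psi\tribar_q$ via $\la_1^h\ge\la_1>0$) is just an explicit spelling-out of that same fact.
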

\begin{proof}
By the definition of the operator $\Etilh(t)$ and using Lemma \ref{lem:mlfbdd} for
$E_{\al,\al}(z)$, we have for any $p\in\mathbb{R}$ and $q\leq p$
\begin{equation*}
  \begin{aligned}
    \tribar \Etilh(t) \psi \tribar_{p}^2&
       = t^{-2 + 2 \alpha } \sum_{j=1}^N E_{\alpha,\alpha}^2(-\la_j^ht^\alpha)(\la_j^h)^p (\psi,\fy_j^h)^2\\
      & \leq Ct^{-2 + \alpha(2 + q-p)} \max_j \frac{(\la_j^h t^\alpha )^{p-q}}{(1+\la_j^ht^\alpha)^2}
      \sum_{j=1}^N  (\la_j^h)^q (\psi,\fy_j^h)^2\\
      & = Ct^{-2 + \alpha(2 + q -p)}\tribar \psi \tribar_{q}^2,
  \end{aligned}
\end{equation*}
where for getting the last inequality
we took into account $0\leq p-q\leq2$. The desired assertion for $p<q$  follows
from the fact that the eigenvalues $\{\lambda_j^h\}$ are bounded away from zero
independently of the mesh size $h$.
\hfill
\end{proof}

\begin{remark}
Lemma \ref{lemma-reg-f} expresses the smoothing properties of the operator $\Etilh$.
While $p=0,1$, the parameter $q$ can be arbitrary as long as it complies with the condition $p-2\leq
q\leq p$. This flexibility in the choice of $q$ is essential for deriving error estimates
for problems with initial data of low regularity.
\end{remark}

Further, we shall need the following inverse inequality.
\begin{lemma}\label{lem:inverse}
There exists a constant $C$ independent of $h$ such that for all $\psi \in X_h$ we have
for any real $l>s$
\begin{equation}\label{inverse}
  \tribar \psi\tribar_{l}\le Ch^{s-l}\tribar \psi\tribar_{s}.
\end{equation}
\end{lemma}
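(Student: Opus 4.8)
The plan is to reduce the estimate to a single scalar bound on the largest eigenvalue of $-\De_h$ and then invoke the classical inverse inequality. Starting from the spectral definition \eqref{eqn:normhp} of the discrete norm, since $l>s$ the exponent $l-s$ is positive, so I would factor the surplus power $(\la_j^h)^{l-s}$ out of every term:
\begin{equation*}
  \tribar \psi\tribar_{l}^2 = \sum_{j=1}^N (\la_j^h)^{l-s}(\la_j^h)^s(\psi,\fy_j^h)^2
     \le \Big(\max_{1\le j\le N}\la_j^h\Big)^{l-s}\sum_{j=1}^N(\la_j^h)^s(\psi,\fy_j^h)^2
     = \Big(\max_{1\le j\le N}\la_j^h\Big)^{l-s}\tribar \psi\tribar_{s}^2.
\end{equation*}
Thus it suffices to prove that $\max_{1\le j\le N}\la_j^h \le C h^{-2}$ with $C$ independent of $h$; taking square roots then gives \eqref{inverse}, since $(h^{-2})^{(l-s)/2}=h^{s-l}$.

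For the eigenvalue bound I would use the Rayleigh quotient characterization of the top eigenvalue of the symmetric positive definite operator $-\De_h$ together with its definition $-(\De_h\psi,\chi)=(\nabla\psi,\nabla\chi)$. This gives
\begin{equation*}
  \max_{1\le j\le N}\la_j^h = \max_{0\neq\chi\in X_h}\frac{(-\De_h\chi,\chi)}{(\chi,\chi)}
     = \max_{0\neq\chi\in X_h}\frac{|\chi|_1^2}{\|\chi\|^2}.
\end{equation*}
On the quasi-uniform family $\{\T_h\}$ the standard $H^1$--$L_2$ inverse inequality for continuous piecewise linear functions yields $|\chi|_1\le Ch^{-1}\|\chi\|$ for all $\chi\in X_h$, whence $\max_j\la_j^h\le Ch^{-2}$, completing the argument.

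I do not expect a genuine obstacle here: the estimate is essentially a restatement of the spectral inverse inequality, and the only substantive input is the bound on the largest discrete eigenvalue. The one hypothesis that must be used is the quasi-uniformity of the triangulation assumed throughout the paper, as it is precisely what guarantees the $H^1$--$L_2$ inverse inequality with a constant uniform in $h$; without it the top eigenvalue could grow faster than $h^{-2}$ on locally refined meshes. I would simply cite this inverse inequality as a well-known fact (e.g. in the spirit of the references already used for Lemma \ref{lem:prh-bound}) rather than reprove it.
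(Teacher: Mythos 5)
Your proof is correct and follows essentially the same route as the paper: bound the largest discrete eigenvalue by $Ch^{-2}$ via the classical $H^1$--$L_2$ inverse inequality on quasi-uniform meshes, then factor $(\la_j^h)^{l-s}$ out of the spectral sum defining $\tribar\cdot\tribar_l$. The only difference is cosmetic---you spell out the Rayleigh quotient characterization of $\max_j\la_j^h$, which the paper compresses into ``by the definition of $-\De_h$.''
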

\begin{proof}
For quasi-uniform triangulations  $\T_h$ the inverse inequality
$ 
  |\psi|_1\le Ch^{-1}\|\psi\|
$ 
holds
for all $\psi \in X_h$.
By the definition of $-\De_h$ this implies $\max_{1\leq j\leq N}\la_j^h \le C/h^2. $
Thus, for the norm $\tribar \cdot \tribar_{p}$ defined in \eqref{eqn:normhp},
we obtain that for any real $l>s$
\begin{equation*}
 \begin{split}
  \tribar \psi \tribar_{l}^2 \leq C \max_j(\la_j^h)^{l-s}\sum_{j=1}^N(\la_j^h)^s(\psi,\fy_j^h)^2
   \leq C h^{2(s-l)}\tribar \psi\tribar_{s}^2.
 \end{split}
\end{equation*}
That completes the proof.
\hfill
\end{proof}

\subsection{Error estimates for smooth initial data}
Here we establish error estimates for the semidiscrete Galerkin method for initial data
$v \in \dot H^2(\Om)$. In a customary way we split the error $u_h(t)-u(t)$ into two terms
as
\begin{equation}\label{def-rho-theta}
u_h-u= (u_h-R_hu)+(R_hu-u):=\vth + \rh.
\end{equation}
By \eqref{rh-bound} and \eqref{DE_smoothing} we have for any $t>0$ and $q=1,2, $
\begin{equation}\label{rho-bound}
 \| \rh(t) \| + h \|\nabla\rh(t)\|\le Ch^2 t^{-\al(1-\frac{q}{2})}|v|_q \quad v\in \dot H^q,
\end{equation}
so it suffices to get proper estimates for $\vth(t) $, which is done in the following lemma.

\begin{lemma}\label{theta-smooth}
Let $u$  and $u_h$ be the solutions of \eqref{eq1} and \eqref{fem}, respectively, with
$v_h=R_hv$. Then for $\vth(t)=u_h(t) -R_hu(t)$ we have
\begin{equation}\label{theta-bound-smooth}
 \|\vth(t)\| + h \|\nabla \vth(t) \| \le Ch^2 |v|_2.
\end{equation}
\end{lemma}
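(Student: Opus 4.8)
The plan is to work with the standard decomposition $\vth = u_h - R_h u$ already introduced, and to derive a discrete fractional evolution equation that $\vth$ satisfies. Subtracting the weak form of \eqref{eq1} (tested against $\chi\in X_h$) from the semidiscrete equation \eqref{fem} gives the Galerkin orthogonality $(\Dal(u_h-u),\chi)+a(u_h-u,\chi)=0$ for all $\chi\in X_h$. Writing $u_h-u=\vth+\rh$ and invoking the defining property of the Ritz projection, $a(\rh,\chi)=a(R_hu-u,\chi)=0$, eliminates the $\rh$-term from the bilinear form, leaving $(\Dal\vth,\chi)+a(\vth,\chi)=-(\Dal\rh,\chi)$. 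Since $\vth\in X_h$, this is equivalent to the operator equation $\Dal\vth-\De_h\vth=-P_h\Dal\rh$ with the initial condition $\vth(0)=v_h-R_hv=0$ (as $v_h=R_hv$). Representing the solution by the discrete Duhamel formula \eqref{Duhamel_o} then yields
\begin{equation*}
\vth(t)=-\int_0^t \Etilh(t-s)\,P_h\Dal\rh(s)\,ds.
\end{equation*}

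The next step is to estimate the data term $\|P_h\Dal\rh(s)\|$. Because $R_h$ is time-independent and linear, it commutes with $\Dal$, so $\Dal\rh=(R_h-I)\Dal u$; combining the $L_2$-stability of $P_h$ with the Ritz bound \eqref{rh-bound} for $q=2$ gives $\|P_h\Dal\rh(s)\|\le\|(R_h-I)\Dal u(s)\|\le Ch^2|\Dal u(s)|_2$. Applying the smoothing estimate \eqref{DE_smoothing} of Theorem \ref{thm:fdereg} with $\ell=1$ and $p=q=2$ (which satisfies the admissibility conditions $0\le p\le q\le 2$, $q\le p+2$) produces $|\Dal u(s)|_2\le Cs^{-\al}|v|_2$, hence $\|P_h\Dal\rh(s)\|\le Ch^2 s^{-\al}|v|_2$. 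For the operator $\Etilh$ I would use Lemma \ref{lemma-reg-f} with $p=q=0$, giving $\tribar\Etilh(t-s)\psi\tribar_0\le C(t-s)^{\al-1}\|\psi\|$. Inserting both bounds into the Duhamel integral reduces the $L_2$ estimate to controlling $\int_0^t(t-s)^{\al-1}s^{-\al}\,ds$, which is exactly the Beta integral $B(\al,1-\al)$ and is therefore a constant independent of $t$; this delivers $\|\vth(t)\|\le Ch^2|v|_2$ uniformly in $t$.

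Finally, for the gradient I would avoid estimating $\tribar\vth\tribar_1$ directly through $\Etilh$: choosing $p=1$ in Lemma \ref{lemma-reg-f} forces the convolution exponent to $(t-s)^{\al/2-1}$, and the resulting integral $\int_0^t(t-s)^{\al/2-1}s^{-\al}\,ds$ scales like $t^{-\al/2}$, spoiling the desired $t$-uniform bound. Instead I would pass from the $L_2$ estimate to the $H^1$ estimate by the inverse inequality of Lemma \ref{lem:inverse}, $\tribar\vth\tribar_1\le Ch^{-1}\tribar\vth\tribar_0=Ch^{-1}\|\vth\|$, which uses the standing quasi-uniformity assumption and gives $h\|\nabla\vth(t)\|=h\,\tribar\vth\tribar_1\le C\|\vth(t)\|\le Ch^2|v|_2$. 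The two bounds together establish \eqref{theta-bound-smooth}. The main subtlety I anticipate is not any single estimate but the bookkeeping of the admissible index ranges in Theorem \ref{thm:fdereg} and Lemma \ref{lemma-reg-f}, and recognizing that the time-uniformity of the bound hinges on the Beta-function cancellation of the singular exponents $(t-s)^{\al-1}$ and $s^{-\al}$, together with the decision to recover the gradient via the inverse inequality rather than a direct smoothing argument.
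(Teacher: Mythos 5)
Your proof is correct, and it follows the paper's argument verbatim up to and including the $L_2$-estimate: the equation $\Dal\vth-\De_h\vth=-P_h\Dal\rh$ with $\vth(0)=0$, the Duhamel representation \eqref{theta-sol}, the pairing of Lemma \ref{lemma-reg-f} with $p=q=0$ against $\|\Dal\rh(s)\|\le Ch^2|\Dal u(s)|_2\le Ch^2s^{-\al}|v|_2$ (Theorem \ref{thm:fdereg}, $\ell=1$, $p=q=2$), and the Beta-function cancellation $\int_0^t(t-s)^{\al-1}s^{-\al}\,ds=B(\al,1-\al)$ are exactly the paper's steps. Where you diverge is the gradient bound, and your stated reason for avoiding the direct route is only half right: the direct route is blocked for the particular pairing you considered (kernel $(t-s)^{\al/2-1}$ against data $s^{-\al}$), but the paper unblocks it by trading a power of $h$ for a milder time singularity. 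Namely, it bounds $\|\Dal\rh(s)\|\le Ch|\Dal u(s)|_1\le Chs^{-\al/2}|v|_2$ (using \eqref{rh-bound} with $q=1$ and Theorem \ref{thm:fdereg} with $\ell=1$, $p=1$, $q=2$), so that $\int_0^t(t-s)^{\al/2-1}s^{-\al/2}\,ds=B(\tfrac{\al}{2},1-\tfrac{\al}{2})$ is constant and $\|\nabla\vth(t)\|\le Ch|v|_2$ uniformly in $t$; this loss of one power of $h$ is affordable since the gradient target is only $O(h)$. Your replacement — the inverse inequality $\|\nabla\vth\|\le Ch^{-1}\|\vth\|$, legitimate here because $\vth(t)\in X_h$ and the mesh is quasi-uniform — is a correct and arguably shorter way to finish the lemma as stated. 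What the paper's route buys that yours does not: since the direct gradient estimate needs only $|\Dal u|_1$, the constraint $p\le q$ in Theorem \ref{thm:fdereg} admits $q=1$ as well as $q=2$, which produces the intermediate-data by-product \eqref{H1-H1-error}, $\|\nabla(u_h(t)-u(t))\|\le Cht^{-\al/2}|v|_1$ for $v\in\dot H^1(\Om)$, recorded in the remark following Theorem \ref{SG-bound-smooth}; your $L_2$-based shortcut cannot reach that case, because the $L_2$ bound it leans on requires $p=q=2$ and hence $v\in\dot H^2(\Om)$.
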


\begin{proof}
 We note that $\vth $ satisfies
\begin{equation}\label{theta-eq}
\Dal \vth(t) -\De_h\vth(t) =-P_h \Dal \rh(t) \for t > 0.
\end{equation}
For $v \in \dot H^q$, $q=1,2$ the Ritz projection $R_hv$ is well defined, so that $\vth(0)=0$
and hence, by Duhamel's principle \eqref{Duhamel_o},
\begin{equation}\label{theta-sol}
\vth(t)=-\int_0^t \Etilh(t-s)P_h \Dal \rh(s)\,ds.
\end{equation}
By using Lemma \ref{lemma-reg-f} with $p=1$ and $q=0$, the stability of $P_h$,
\eqref{rh-bound}, and the estimate \eqref{DE_smoothing} with $\ell=1$, $p=1$ and we find,
for $q=1, 2$,
\begin{equation}\label{nabla-E_h}
  \begin{split}
    \|\nabla  \Etilh(t-s) P_h \Dal \rh(s)\| & \le
    C(t-s)^{\frac{\alpha}{2}-1}\,\| \Dal \rh(s)\| \\
    & \le C h (t-s)^{\frac{\alpha}{2}-1}\,| \Dal u(s)|_1\\
    & \le Ch(t-s)^{\frac{\alpha}{2}-1}s^{\alpha(-\frac{3}{2}+\frac{q}{2})}|v|_q.
  \end{split}
\end{equation}
By substituting this inequality into \eqref{theta-sol} we obtain that for $q=1,2 $
\begin{align}\label{H1-v-H1}
 \|\nabla \vth(t)\| \le
 Ch\int_{0}^t(t-s)^{\frac{\alpha}{2}-1}s^{\alpha(-\frac{3}{2}+\frac{q}{2})}\,ds\,|v|_q
               \le C h t^{-\al(1- \frac{q}{2})} |v|_q, 
\end{align}
where we have used that for $\al <1$
\begin{equation*}
 \begin{aligned}
   \int_{0}^t(t-s)^{\frac{\alpha}{2}-1}s^{\alpha(-\frac{3}{2}+\frac{q}{2})}\,ds
   & = t^{\frac{\alpha}{2} -\frac{3\alpha}{2}+\frac{q\alpha}{2}}\int_0^1(1-s)^{\frac{\al}{2}-1}s^{\alpha(-\frac{3}{2}+\frac{q}{2})}ds\\
   & = B(\tfrac{\alpha}{2},\al(-\tfrac{3}{2}+\tfrac{q}{2})+1) t^{-\al(1- \frac{q}{2})},
 \end{aligned}
\end{equation*}
with $B(\cdot,\cdot)$ being the standard Beta function. Since both arguments, $\frac{\alpha}{2}>0$ and
$\frac{-3+q}{2}\alpha+1>0$ for $q=1,2$, the value
$B(\alpha,\alpha(-\tfrac{3}{2}+\frac{q}{2})+1)$ is finite. Taking $q=2$ we get the desired
estimate for $\nabla\vth$.

Next, by using the smoothing
property of the operator $\Etilh$ in Lemma \ref{lemma-reg-f} with
$p=q=0$ and that of the operator $E$ in Theorem
\ref{thm:fdereg} with $\ell=1$ and $p=q=2$, we get
\begin{equation}\label{L2-smooth}
 \begin{split}
  \| \vartheta (t) \|  & \le \int_0^t \| \Etilh(t-s) P_h \Dal \rh(s)\| ds \\
   & \le C  \int_0^t (t-s)^{\alpha-1}\| \Dal \rh(s)\| ds \\
   & \le C h^2  \int_0^t (t-s)^{\alpha-1} | \Dal u(s)|_2 ds \\
   & \le C h^2  \int_0^t (t-s)^{\alpha-1} s^{ - \al} ds |v|_2 = C B(\alpha,1-\alpha)h^2 |v|_2 .
 \end{split}
\end{equation}
This completes the proof.
\hfill
\end{proof}

Using the triangle inequality and the estimates \eqref{rho-bound} and \eqref{theta-bound-smooth} we get
the main result in the subsection.
\begin{theorem}\label{SG-bound-smooth}
Let $u$  and $u_h$ be the solutions of \eqref{eq1} and \eqref{fem}, respectively, with
$v_h=R_hv$. Then
\begin{equation}\label{bound-smooth}
  \| u_h(t) - u(t) \| + h \|\nabla (u_h(t) - u(t))\| \le Ch^2 |v|_2.
\end{equation}
\end{theorem}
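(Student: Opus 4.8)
The plan is to combine the error splitting \eqref{def-rho-theta}, namely $u_h-u=\vth+\rh$, with the triangle inequality and the two component bounds already established. Measuring both norms simultaneously, I would first write
\begin{equation*}
\|u_h(t)-u(t)\|+h\|\nabla(u_h(t)-u(t))\|\le\big(\|\vth(t)\|+h\|\nabla\vth(t)\|\big)+\big(\|\rh(t)\|+h\|\nabla\rh(t)\|\big),
\end{equation*}
so that it suffices to bound each of the two grouped quantities by $Ch^2|v|_2$.

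For the projection error $\rh$ I would invoke \eqref{rho-bound} with $q=2$. The essential observation is that the temporal weight $t^{-\al(1-q/2)}$ collapses to $t^0=1$ precisely when $q=2$; hence the Ritz error is controlled uniformly in $t\ge0$ with no singularity as $t\to0^+$, and this is exactly where the smoothness assumption $v\in\dot H^2(\Om)$ enters. For the finite element correction $\vth$ I would simply quote Lemma \ref{theta-smooth}, which already supplies the uniform-in-$t$ bound $\|\vth(t)\|+h\|\nabla\vth(t)\|\le Ch^2|v|_2$. Adding the two estimates and absorbing the constants into a single $C$ yields \eqref{bound-smooth}.

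The genuine obstacle therefore lies not in this final assembly but upstream, in Lemma \ref{theta-smooth}: there one represents $\vth$ through the discrete Duhamel formula \eqref{theta-sol}, pairs the smoothing bound for $\Etilh$ from Lemma \ref{lemma-reg-f} with the smoothing estimate \eqref{DE_smoothing} for $\Dal\rh$, and finally checks that the resulting convolution integrals in $s$ converge by identifying them with Beta integrals whose arguments remain positive for $\al<1$. Granting that lemma, the theorem is an immediate corollary, and I expect no further difficulty in the step carried out here.
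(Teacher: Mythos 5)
Your proposal is correct and follows exactly the paper's own argument: the error is split as $u_h-u=\vth+\rh$ via \eqref{def-rho-theta}, the Ritz error $\rh$ is handled by \eqref{rho-bound} with $q=2$ (where the factor $t^{-\al(1-q/2)}$ becomes $1$), and $\vth$ is controlled by Lemma \ref{theta-smooth}, with the triangle inequality completing the proof. You also correctly identify that all the substance lives in Lemma \ref{theta-smooth}, which is precisely how the paper organizes the argument.
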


\begin{remark}
As a byproduct of estimates \eqref{rh-bound} and \eqref{H1-v-H1}
we also got a bound for the error for $v \in \dot H^1(\Om)$ and $v_h=R_h v$:
\begin{equation}\label{H1-H1-error}
 \|\nabla (u_h(t) - u(t))\| \le Cht^{-\frac{\al}{2}}|v|_1.
\end{equation}
\end{remark}

\begin{remark}\label{super}
In view of the smoothing property of the operator $\Etilh$ established in Lemma
\ref{lemma-reg-f}, we can improve the estimate of $\vth(t)$ for $q=2$ to $O(h^2)$ at the
expense of slightly increasing the factor by $O(t^{-\frac{\al}{2}})$:
\begin{equation*}
  \begin{split}
    \|\nabla  \Etilh(t-s) P_h \Dal \rh(s)\| &
     \le C h^2 (t-s)^{\frac{\alpha}{2}-1}\,| \Dal u(s)|_2 \\
   &  \le Ch^2(t-s)^{\frac{\alpha}{2}-1}s^{-\alpha}|v|_2,
  \end{split}
\end{equation*}
which yields
\begin{equation}
\label{superconvergence}
\|\nabla\vth\|\leq Ch^2t^{-\frac{\al}{2}}|v|_2.
\end{equation}
\end{remark}

\subsection{Error estimates for non-smooth initial data}

Now we prove an error estimate for nonsmooth initial data, $v\in L_2(\Om)$,
and the intermediate case, $v \in \dot H^1(\Om)$. Since the Ritz projection $R_h v$ is
not defined for $v\in L_2(\Om)$, we shall use instead the $L_2$-projection $P_h$ onto the
finite element space $X_h$, and  split the error $u_h-u$ into:
\begin{equation*}
  u_h-u=(u_h-P_hu)+(P_hu-u):=\vtht + \rlh.
\end{equation*}
By Lemma \ref{lem:prh-bound} and Theorem \ref{thm:fdereg} we have
\begin{equation}\label{rho-h1-smooth}
  \| \rlh(t) \| + h \|\nabla\rlh(t) \|\leq Ch^2|u(t)|_2 \leq Ch^2 t^{-\al(1-\frac{q}{2})}\|v\|_q, \quad q=0,1.
\end{equation}
Thus, we only need to estimate the term $\vtht$. Obviously, $ P_h \Dal \rlh = \Dal P_h(P_hu-u)=0$
and we get the following problem for $\vtht$:
\begin{equation}\label{eq:thettil}
 \Dal \vtht(t) -\Delta_h \vtht(t) = 
- \Delta_h (R_h u - P_h u)(t), \quad t>0, \quad \vtht(0)=0.
\end{equation}
Then with the help of formula \eqref{E-tilde}, $\vtht(t)$ can be represented by
\begin{equation}\label{eqn:vtht}
  \vtht(t) = - \int_0^t\Etilh(t-s)\Delta_h(R_hu-P_hu)(s)\,ds.
\end{equation}

Next, we show the following estimate for $\vtht(t)$: 
\begin{lemma}\label{lem:vtht}
Let $\vtht(t)$ be defined by \eqref{eqn:vtht}. Then for $p=0,1$, $q=0,1$, and
$\ell_h=|\ln h|$, the following estimate holds
\begin{equation*}
  \|\vtht(t)\|_p\leq Ch^{2-p}\ell_h t^{-\al(1-\frac{q}{2})} \|v\|_q .
\end{equation*}
\end{lemma}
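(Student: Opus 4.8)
The plan is to start from the representation \eqref{eqn:vtht}, abbreviate $w(s)=(R_hu-P_hu)(s)\in X_h$, and estimate
\[
\|\vtht(t)\|_p\le\int_0^t\tribar\Etilh(t-s)\Delta_h w(s)\tribar_p\,ds.
\]
Since $-\Delta_h$ acts as multiplication by $\la_j^h$ in its own eigenbasis, one has the norm shift $\tribar\Delta_h w\tribar_r=\tribar w\tribar_{r+2}$, so I can feed $\Delta_h w$ directly into the smoothing estimate of Lemma \ref{lemma-reg-f}. Two choices of the lower index there will be played off against each other: taking it to be $p-2$ produces
\[
\tribar\Etilh(t-s)\Delta_h w\tribar_p\le C(t-s)^{-1}\tribar w\tribar_p,
\]
which is optimal in the power of $h$ but carries a \emph{non-integrable} factor $(t-s)^{-1}$, whereas taking it to be $p-1$ produces
\[
\tribar\Etilh(t-s)\Delta_h w\tribar_p\le C(t-s)^{\frac{\al}{2}-1}\tribar w\tribar_{p+1},
\]
which is integrable in time but at the cost of one extra order in the norm of $w$.

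Next I would control the norms of $w$. From \eqref{rh-bound} and \eqref{ph-bound} with $q=2$ one gets $\tribar w\tribar_0\le Ch^2|u(s)|_2$ and $\tribar w\tribar_1\le Ch|u(s)|_2$; in the case $p=1$ the term $\tribar w\tribar_2$ appearing in the second estimate is handled by the inverse inequality of Lemma \ref{lem:inverse}, $\tribar w\tribar_2\le Ch^{-1}\tribar w\tribar_1\le C|u(s)|_2$. Inserting the smoothing bound $|u(s)|_2\le Cs^{-\al(1-\frac{q}{2})}|v|_q$ from Theorem \ref{thm:fdereg} (with $\ell=0$, $p=2$), the two integrand estimates become, for $p,q\in\{0,1\}$,
\[
\tribar\Etilh(t-s)\Delta_h w\tribar_p\le Ch^{2-p}(t-s)^{-1}s^{-\al(1-\frac{q}{2})}|v|_q
\]
and
\[
\tribar\Etilh(t-s)\Delta_h w\tribar_p\le Ch^{1-p}(t-s)^{\frac{\al}{2}-1}s^{-\al(1-\frac{q}{2})}|v|_q .
\]

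To combine them I would split $\int_0^t=\int_0^{t-\delta}+\int_{t-\delta}^t$ with the cutoff $\delta=h^{2/\al}$. On $(0,t-\delta)$ I use the optimal-order estimate; the time integral $\int_0^{t-\delta}(t-s)^{-1}s^{-\al(1-\frac{q}{2})}\,ds$ is bounded by $Ct^{-\al(1-\frac{q}{2})}\ell_h$, where an auxiliary subdivision at $s=t/2$ separates the integrable singularity at $s=0$ from the $(t-s)^{-1}$ behaviour near $s=t$, and $\int_{t/2}^{t-\delta}(t-s)^{-1}\,ds=\ln\frac{t}{2\delta}\le C\ell_h$ is exactly where the logarithm is born. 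On $(t-\delta,t)$ I use the integrable estimate; because $\delta^{\al/2}=h$, the contribution is $Ch^{1-p}\int_{t-\delta}^t(t-s)^{\frac{\al}{2}-1}\,ds\le Ch^{1-p}\delta^{\al/2}=Ch^{2-p}$, with no logarithm. Adding the two yields $\|\vtht(t)\|_p\le Ch^{2-p}\ell_h t^{-\al(1-\frac{q}{2})}|v|_q$, which is the assertion.

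The main obstacle is precisely the non-integrable $(t-s)^{-1}$ in the order-optimal bound: it cannot be integrated directly, and the whole argument rests on trading, only near $s=t$, one power of $h$ for an integrable time weight (via the inverse inequality) and then tuning the cutoff $\delta=h^{2/\al}$ so that the two pieces balance. This balancing is what forces the factor $\ell_h$, and the care needed to keep the bound uniform in $t$ (together with the subdivision separating the $s\to0$ and $s\to t$ singularities) is the delicate part of the proof.
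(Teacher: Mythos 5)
Your argument is correct in its essentials, but it reaches the logarithm by a genuinely different route than the paper. The paper keeps a single integrand estimate with an $\epsilon$-shifted exponent: it applies Lemma \ref{lemma-reg-f} with lower index $p-2+\epsilon$, pays $h^{-\epsilon}$ through the inverse inequality \eqref{inverse}, and obtains the integrable kernel $(t-s)^{\frac{\epsilon}{2}\al-1}$, whose time integral is a Beta function blowing up like $1/\epsilon$ (via $\Gamma(\tfrac{\epsilon}{2}\al)\sim\tfrac{2}{\al\epsilon}$); the choice $\epsilon=1/\ell_h$ then makes $h^{-\epsilon}$ bounded and converts $1/\epsilon$ into $\ell_h$. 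You instead use the two endpoint cases of the same lemma, $q=p-2$ (optimal in $h$, kernel $(t-s)^{-1}$) and $q=p-1$ (integrable kernel, one power of $h$ lost), and split the integral at $t-\delta$ with $\delta=h^{2/\al}$, so that the logarithm arises from $\int_{t/2}^{t-\delta}(t-s)^{-1}ds$ while the short near-field interval restores the missing power of $h$. Both proofs rest on the same ingredients (Lemma \ref{lemma-reg-f}, Lemma \ref{lem:inverse}, the projection bounds \eqref{ph-bound}--\eqref{rh-bound}, and Theorem \ref{thm:fdereg}); yours is more elementary in that it needs no Gamma-function asymptotics, at the price of a case analysis in $t$, and like the paper's proof it uses $t\le T$ to absorb harmless factors into the constant.

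Two details in your write-up need repair, both minor. First, in the near-field contribution you silently drop the factor $s^{-\al(1-\frac{q}{2})}$ from the integrand; since that factor exceeds $1$ for small $s$, the inequality $Ch^{1-p}\int_{t-\delta}^t(t-s)^{\frac{\al}{2}-1}s^{-\al(1-\frac{q}{2})}ds\le Ch^{1-p}\int_{t-\delta}^t(t-s)^{\frac{\al}{2}-1}ds$ is not valid as written, and the resulting bound $Ch^{2-p}$ also lacks the weight $t^{-\al(1-\frac{q}{2})}$ required for an estimate uniform down to $t\to0^+$. The fix is immediate: when $t\ge2\delta$ one has $s\ge t-\delta\ge t/2$ on $(t-\delta,t)$, hence $s^{-\al(1-\frac{q}{2})}\le Ct^{-\al(1-\frac{q}{2})}$, and the near-field piece becomes $Ch^{2-p}t^{-\al(1-\frac{q}{2})}\|v\|_q$. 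Second, your splitting tacitly assumes $t\ge2\delta$ (otherwise the auxiliary point $t/2$ lies beyond $t-\delta$, or $t-\delta<0$); for $t<2\delta$ you should apply the integrable estimate on all of $(0,t)$, giving $Ch^{1-p}B\bigl(\tfrac{\al}{2},1-\al(1-\tfrac{q}{2})\bigr)t^{\frac{\al}{2}}t^{-\al(1-\frac{q}{2})}\|v\|_q\le Ch^{2-p}t^{-\al(1-\frac{q}{2})}\|v\|_q$, since $t^{\al/2}\le(2\delta)^{\al/2}\le Ch$. With these two patches your proof yields the lemma in full.
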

\begin{proof}
By Lemma \ref{lemma-reg-f} with $p=0,1$ and $q=p-2+\epsilon$, for any
$\epsilon>0$ we have
\begin{equation*}
   \begin{aligned}
     \|\vtht(t)\|_{p} &\leq \int_0^t \|\Etilh(t-s)\Delta_h(R_h u-P_hu)(s)\|_{p} ds\\
      &\leq \int_0^t (t-s)^{\frac{\epsilon}{2}\al - 1} \tribar \Delta_h(R_h u -P_hu)\tribar_{p-2+\epsilon}ds\\
      & \leq \int_0^t(t-s)^{\frac{\epsilon}{2}\al-1} \tribar R_h u -P_hu\tribar_{p+\epsilon}ds := A.
    \end{aligned}
\end{equation*}
Further, we apply the inverse inequality \eqref{inverse} for $R_h u - P_h u$, the bounds \eqref{ph-bound} and
\eqref{rh-bound}, for $P_h u -u$ and $R_hu -u$, respectively, and the smoothing property
\eqref{DE_smoothing} with $\ell=0$ and $p=2$ to get
\begin{equation*}
   \begin{aligned}
A & \leq C h^{-\epsilon} \int_0^t(t-s)^{\frac{\epsilon}{2}\al-1} \| R_h u -P_hu\|_{p}ds\\
      & \leq C h^{2-p-\epsilon} \int_0^t(t-s)^{\frac{\epsilon}{2}\al-1} \| u(s) \|_{2}ds\\
      & \leq C h^{2-p-\epsilon} \int_0^t(t-s)^{\frac{\epsilon}{2}\al-1} s^{-\al(1-\frac{q}{2})} ds \, \|v\|_q \\
      & = CB\left(\frac{\epsilon}{2}\al,1-\al+\frac{q}{2}\al\right)h^{2-p-\epsilon}t^{-\al(1-\frac{q}{2}-\frac{\epsilon}{2})}\, \|v\|_q \\
      & \le \frac{C}{\epsilon} h^{2-p-\epsilon} t^{-\al(1-\frac{q}{2}) } \, \|v\|_q .
   \end{aligned}
\end{equation*}
The last inequality follows from the fact
$B(\frac{\epsilon}{2}\al,1-\al+\frac{q}{2}\al)=\frac{\Gamma(\frac{\epsilon}{2}\al)
\Gamma(1-\al+\frac{q}{2}\al)}{\Gamma(1-\al+\frac{q+\epsilon}{2}\al)}$ and
$\Gamma(\frac{\epsilon}{2}\al) \sim \frac{2}{\al\epsilon}$ as $\epsilon\rightarrow 0^+$,
e.g., by means of Laurenz expansion of the Gamma function. The desired assertion follows
by choosing $\epsilon=1/\ell_h$.
\hfill
\end{proof}

Then Lemma \ref{lem:vtht} and the triangle inequality yield the
following almost optimal error estimate for the semidiscrete Galerkin method for initial data $ v \in
\dot H^q$, $q=0,1$:
\begin{theorem}\label{SG-H1-norm-nonsmooth}
Let $u$  and $u_h$ be the solutions of \eqref{eq1} and \eqref{fem} with
$v_h=P_hv$, respectively. Then with $\ell_h =| \ln h|$
\begin{equation}\label{Galerkin-nonsmooth}
 \| u_h(t) - u(t) \| + h  \|\nabla(u_h(t) - u(t))\| \le Ch^2 \, \ell_h \,t^{-\al(1-\frac{q}{2})}\|v\|_q,~~ q=0,1. 
\end{equation}
\end{theorem}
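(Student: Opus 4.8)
The plan is to read off \eqref{Galerkin-nonsmooth} from the error splitting $u_h-u=\vtht+\rlh$ with $\vtht=u_h-P_hu$ and $\rlh=P_hu-u$ introduced above, combining the projection bound \eqref{rho-h1-smooth} for $\rlh$ with the estimate for $\vtht$ from Lemma \ref{lem:vtht}, and then applying the triangle inequality in the $L_2$- and gradient-norms separately. Both ingredients are already at our disposal, so the statement is essentially a corollary.

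First I would treat the $L_2$-term: $\|u_h(t)-u(t)\|\le\|\vtht(t)\|+\|\rlh(t)\|$, where Lemma \ref{lem:vtht} with $p=0$ gives $\|\vtht(t)\|\le Ch^2\ell_h t^{-\al(1-\frac{q}{2})}\|v\|_q$ and \eqref{rho-h1-smooth} gives $\|\rlh(t)\|\le Ch^2 t^{-\al(1-\frac{q}{2})}\|v\|_q$. For the gradient I would use the identity $\|\nabla\psi\|=\tribar\psi\tribar_{1}$ valid on $X_h$ (recorded after \eqref{eqn:normhp}), so that $\|\nabla\vtht(t)\|$ is exactly the discrete norm controlled by Lemma \ref{lem:vtht} at $p=1$, namely $\|\nabla\vtht(t)\|\le Ch\,\ell_h t^{-\al(1-\frac{q}{2})}\|v\|_q$; multiplying by the prefactor $h$ recovers order $h^2$, while the remaining piece $h\|\nabla\rlh(t)\|\le Ch^2 t^{-\al(1-\frac{q}{2})}\|v\|_q$ is furnished by \eqref{rho-h1-smooth}. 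The two $\rlh$-contributions carry no logarithmic factor and share the same temporal weight $t^{-\al(1-\frac{q}{2})}$ as the $\vtht$-contributions, so they are absorbed into the latter, and summing the two norm estimates produces \eqref{Galerkin-nonsmooth}.

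The only point deserving care is the bookkeeping that the exponent $h^{2-p}$ of Lemma \ref{lem:vtht}, evaluated at $p=1$, pairs with the explicit $h$ in front of $\|\nabla(u_h-u)\|$ to yield the full $h^2$ rate. The genuine difficulty, however, sits entirely inside Lemma \ref{lem:vtht}, which I regard as the real obstacle: there the $h^{-\epsilon}$ loss from the inverse inequality \eqref{inverse} (needed to trade $R_hu-P_hu$ against $u$ through \eqref{ph-bound}--\eqref{rh-bound} and the smoothing estimate \eqref{DE_smoothing}) must be balanced against the blow-up $B(\tfrac{\epsilon}{2}\al,1-\al+\tfrac{q}{2}\al)\sim C/\epsilon$ of the Beta integral as $\epsilon\to0^+$, the optimal choice $\epsilon=1/\ell_h$ converting $h^{-\epsilon}/\epsilon$ into the logarithmic factor $\ell_h$ that appears in \eqref{Galerkin-nonsmooth}. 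Once that lemma is granted, the present theorem follows by the routine triangle-inequality argument just described.
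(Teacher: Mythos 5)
Your proposal is correct and follows exactly the paper's own route: the paper likewise obtains \eqref{Galerkin-nonsmooth} directly from the splitting $u_h-u=\vtht+\rlh$, combining Lemma \ref{lem:vtht} (at $p=0,1$) with the projection bound \eqref{rho-h1-smooth} via the triangle inequality. Your identification of the genuine work as residing in Lemma \ref{lem:vtht} --- the balance of the $h^{-\epsilon}$ inverse-inequality loss against the $O(1/\epsilon)$ blow-up of the Beta function, resolved by $\epsilon=1/\ell_h$ --- matches the paper's argument precisely.
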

\begin{remark}
For $v \in \dot H^1(\Om)$  and $v_h = R_h v$,  we have
established the estimate \eqref{H1-H1-error},
which is slightly better than \eqref{Galerkin-nonsmooth},
since it does not have the factor $ \ell_h$.
\end{remark}

\subsection{Problems with more general elliptic operators}\label{general_problems}
The preceding analysis could be straightforwardly extended to problems with
more general boundary conditions/spatially varying coefficients. In fact this
is the strength of the finite element method and
the advantages of the direct numerical methods for treating such problems 
in comparison with some
analytical techniques that are limited to constant coefficients and canonical domains. More precisely,
we can study problem \eqref{fem} with a bilinear form
$a(\cdot, \cdot): V \times V\mapsto \mathbb{R}$ of the form:
\begin{equation}\label{general-form}
 a(u,\chi) = \int_\Om (k(x) \nabla u \cdot \nabla \chi + c(x) u \chi)\, dx,
\end{equation}
where $k(x) $ is a symmetric $d \times d$ matrix-valued measurable function on
the domain $\Omega$ with smooth entries and
$c(x)$ is an $L_\infty$-function. We assume  that
\[
  c_0|\xi|^2\leq \xi^T k(x) \xi \leq c_1 |\xi|^2,\ \for \xi \in {\mathbb R}^d,\  x \in \Om,
\]
where $c_0,c_1>0$ are constants, and the bilinear form $a(\cdot, \cdot)$ is coercive 
on $V \equiv H^1(\Om)$. Further, we assume that the problem $a(w,\chi)=(f,\chi), \, \forall \chi \in V$
has unique solution $w \in V$, which for $f\in L_2(\Om)$
has full elliptic regularity, $\|w\|_{H^2} \le C\|f\|_{L_2}$.

Under these conditions we can define a positive definite operator
${\mathcal A}:  H^1_0 \to H^{-1},$ which has
a complete set of eigenfunctions $\fy_j(x)$ and respective eigenvalues $\la_j({\mathcal A})>0$.
Then we can define the spaces $\dot H^q$ as in Section \ref{ssec:represent}.
Further, we define the discrete operator ${\mathcal A}_h: X_h \to X_h$ by
\begin{equation*}
  ({\mathcal A}_h \psi,\chi) = a(\psi,\chi), ~~\forall \psi, \chi \in V_h .
\end{equation*}
Then all results for problem \eqref{eq1} can be easily
extended to fractional-order problems with
elliptic equations of this more general form.

\renewcommand{\Ftilh}{{\bar{F}_h}}

\section{Lumped mass \FEM}\label{sec:lumped-mass}

Now we consider the lumped mass FEM in planar domains (see, e.g. \cite[Chapter 15, pp.
239--244]{Thomee97}). For completeness we shall introduce this approximation. Let $\zK_j
$, $j=1,\dots,d+1$ be the vertices of the d-simplex $\K \in \T_h$. Consider the quadrature
formula
\begin{equation}\label{quadrature}
Q_{\K,h}(f) = \frac{|\K|}{d+1} \sum_{j=1}^{d+1} f(\zK_j) \approx \int_\K f dx.
\end{equation}
We may then define an approximation of the $L_2$-inner product in $X_h$ by
\begin{equation}\label{h-inner}
(w, \chi)_h = \sum_{\K \in \T_h}  Q_{\K,h}(w \chi).
\end{equation}

Then lumped mass Galerkin FEM is: find $ \luh (t)\in X_h$ such that
\begin{equation}\label{fem-lumped}
\begin{split}
 {(\Dal \luh, \chi)_h}+ a(\luh,\chi)&= (f, \chi) 
\quad \forall \chi\in X_h,\ t >0,\\
\luh(0)&=v_h.
\end{split}
\end{equation}

We now introduce the discrete Laplacian $-\bDelh:X_h\rightarrow X_h$, corresponding to
the inner product $(\cdot,\cdot)_h$, by
\begin{equation}\label{eqn:bDelh}
  -(\bDelh\psi,\chi)_h = (\nabla \psi,\nabla \chi)\quad \forall\psi,\chi\in X_h.
\end{equation}
Also, we introduce the $L_2$-projection, $\Pbarh: L_2(\Om) \rightarrow X_h$ by
$$
(\Pbarh f, \chi)_h = (f, \chi), \quad \forall \chi\in X_h.
$$
The lumped mass method can then be written with $f_h = \Pbarh f$ in operator form as
\begin{equation*}
  \Dal{\luh}(t)-\bDelh \luh(t) = f_h(t) \quad \mbox{ for }t\geq 0 \quad \mbox{with }\luh(0)=v_h.
\end{equation*}

Similarly as in Section \ref{sec:galerkin}, we define the discrete operator $\Ebar_h$ by
\begin{equation}\label{eqn:Fh}
\Ebar_h(t)v_h = \sum_{j=1}^NE_{\al,1}(-\bar{\la}_j^h t^\al)(v_h,\bar{\fy}_j^h)_h\bar{\fy}_j^h,
\end{equation}
where $\{\bar{\la}_j^h\}_{j=1}^N$ and $\{\bar{\fy}_j^h\}_{j=1}^N$ are respectively the eigenvalues and
the orthonormal eigenfunctions of $-\bDelh$ with respect to $(\cdot,\cdot)_h$.

Analogously to \eqref{E-tilde}, we introduce the operator $\Ftilh$ by
\begin{equation}\label{eqn:Ftilh}
  \Ftilh f_h(t) = \sum_{j=1}^Nt^{\al-1}E_{\al,\al}(-\bar{\la}_j^h t^\al)(f_h,\bar{\fy}_j^h)_h\bar{\fy}_j^h.
\end{equation}
Then the solution $\bar{u}_h$ to problem \eqref{fem-lumped} can be
represented as follows
\begin{equation*}
  \bar{u}_h(t) = \Ebar_h(t)v_h + \int_0^t\Ftilh(t-s)f_h(s)ds.
\end{equation*}

For our analysis we shall need the following modification
of the discrete norm \eqref{eqn:normhp}, $\tribar \cdot\tribar_{p}$, on
the space $X_h$
\begin{equation}\label{eqn:normhbarp}
\tribar{\psi} \tribar^2_p = 
\sum_{j=1}^N (\bar{\la}_j^h)^p(\psi,\bar{\fy}_j^h)_h^2\quad \forall p\in\mathbb{R}.
\end{equation}
The following norm equivalence result is useful.
\begin{lemma}\label{lem:normequivhbarp}
The norm $\tribar \cdot \tribar_p$ 
defined in \eqref{eqn:normhbarp} is equivalent to the norm
$|\cdot|_p$  on the space $X_h$ for $p=0,1$.
\end{lemma}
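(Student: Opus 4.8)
The plan is to split into the two cases $p=1$ and $p=0$, observing that the first is in fact an exact identity and only the second requires a genuine (though standard) estimate.

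First, for $p=1$ I would expand $\psi\in X_h$ in the $(\cdot,\cdot)_h$-orthonormal eigenbasis as $\psi=\sum_{j=1}^N c_j\bar{\fy}_j^h$ with $c_j=(\psi,\bar{\fy}_j^h)_h$, so that by \eqref{eqn:normhbarp} we have $\tribar\psi\tribar_1^2=\sum_{j=1}^N\bar{\la}_j^h c_j^2$. The point is to read off from the definition \eqref{eqn:bDelh} of $\bDelh$ the orthogonality relation for the gradients: testing $-\bDelh\bar{\fy}_j^h=\bar{\la}_j^h\bar{\fy}_j^h$ against $\bar{\fy}_i^h$ in $(\cdot,\cdot)_h$ gives $(\nabla\bar{\fy}_j^h,\nabla\bar{\fy}_i^h)=\bar{\la}_j^h(\bar{\fy}_j^h,\bar{\fy}_i^h)_h=\bar{\la}_j^h\delta_{ij}$. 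Hence $\|\nabla\psi\|^2=\sum_{i,j}c_ic_j(\nabla\bar{\fy}_i^h,\nabla\bar{\fy}_j^h)=\sum_{j=1}^N\bar{\la}_j^h c_j^2=\tribar\psi\tribar_1^2$. Since $X_h\subset H_0^1(\Om)$ and $|\psi|_1=\|\nabla\psi\|$ for such $\psi$, this yields the exact identity $\tribar\psi\tribar_1=|\psi|_1$, which is even stronger than the asserted equivalence and requires no mesh assumption.

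Next, for $p=0$, orthonormality of the basis $\{\bar{\fy}_j^h\}_{j=1}^N$ in $(\cdot,\cdot)_h$ gives the Parseval identity $\tribar\psi\tribar_0^2=\sum_{j=1}^N(\psi,\bar{\fy}_j^h)_h^2=(\psi,\psi)_h$, so the claim reduces to the equivalence of the lumped-mass norm $(\psi,\psi)_h^{1/2}$ and the $L_2$-norm $\|\psi\|=|\psi|_0$ on $X_h$. I would prove this elementwise. On a simplex $\K\in\T_h$, map to the reference simplex by the affine transformation; since vertices are carried to vertices and $|\K|$ equals the Jacobian times the reference volume, both $\int_\K\chi^2\,dx$ and $Q_{\K,h}(\chi^2)$ (see \eqref{quadrature}) scale by the same Jacobian factor. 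Their quotient therefore equals the corresponding quotient on the reference element, which for linear functions is a finite-dimensional, purely algebraic quantity bounded above and below by positive constants depending only on $d$. Summing over $\K\in\T_h$ via \eqref{h-inner} gives $c_0\|\psi\|^2\le(\psi,\psi)_h\le c_1\|\psi\|^2$ with $c_0,c_1>0$ independent of $h$, i.e. $\tribar\psi\tribar_0$ is equivalent to $|\psi|_0$.

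The only substantive step is the elementwise quadrature equivalence used for $p=0$; it is the classical lumped-mass norm equivalence (cf. \cite[Chapter 15]{Thomee97}), and the affine scaling argument makes the constants geometry-independent. Everything in the $p=1$ case is an exact algebraic identity forced by the definition of $\bDelh$, so no difficulty arises there.
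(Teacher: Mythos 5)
Your proposal is correct and takes essentially the same route as the paper: for $p=1$ the paper uses the identical exact identity $\|\nabla\psi\|^2=((-\bDelh)\psi,\psi)_h=\sum_{j=1}^N\bar{\la}_j^h(\psi,\bar{\fy}_j^h)_h^2=\tribar\psi\tribar_1^2$ read off from the definition of $\bDelh$, and for $p=0$ it simply cites the lumped-mass/$L_2$ equivalence as well known, namely $\tfrac12\tribar\psi\tribar_0\le\|\psi\|\le\tribar\psi\tribar_0$. The only difference is that you supply the standard reference-element scaling proof of that well-known fact, which is a harmless (and correct) expansion of the paper's citation.
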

\begin{proof}
The equivalence the the two norms for the case of $p=0$ is well known:
\begin{equation*}
  \frac{1}{2}\tribar \psi\tribar_{0}\leq\|\psi\|\leq \tribar \psi\tribar_{0},\quad \forall \psi\in X_h.
\end{equation*}
From the definitions of the discrete Laplacian $-\bDelh$ and the
eigenpairs $\{(\bar{\la}_j^h,\bar{\fy}_j^h)\}$, we deduce
\begin{equation*}
  \|\nabla \psi\|^2=((-\bDelh)\psi,\psi)_h
 = \sum_{j=1}^N \bar{\la}_j^h (\psi,\bar{\fy}_j^h)_h^2 =  \tribar \psi\tribar_{1} \quad \forall \psi\in X_h.
\end{equation*}
This completes the proof of the lemma.\hfill
\end{proof}

We shall also need the following inverse inequality, whose proof is identical with that
of Lemma \ref{lem:inverse}:
\begin{equation}\label{inverse:hbarp}
  \tribar \psi\tribar_{l}\le Ch^{s-l} \tribar \psi\tribar_{s} \quad l>s .
\end{equation}

We show the following analogue of Lemma \ref{lemma-reg-f}:
\begin{lemma}\label{lem:Ftilh}
Let $\Ftilh$ be defined by \eqref{eqn:Ftilh}. Then we have for $\psi \in X_h$ and all $t>0$,
\begin{equation*}
  \tribar \Ftilh(t) \psi\tribar_{p}\leq \left\{\begin{array}{ll}
      Ct^{-1+\al(1+\frac{q-p}{2})}\tribar \psi \tribar_{q}, & p-2\leq q\leq p,\\[1.3ex]
      Ct^{-1+\al}\tribar \psi \tribar_{q},& p<q.
   \end{array}\right.
\end{equation*}
\end{lemma}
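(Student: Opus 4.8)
The plan is to follow the proof of Lemma~\ref{lemma-reg-f} almost verbatim, transferring the argument from the standard Galerkin setting to the lumped mass setting: the discrete Laplacian $-\De_h$ is replaced by $-\bDelh$, its $L_2$-orthonormal eigenpairs $\{(\la_j^h,\fy_j^h)\}$ by the $(\cdot,\cdot)_h$-orthonormal eigenpairs $\{(\bar{\la}_j^h,\bar{\fy}_j^h)\}$, and the norm $\tribar\cdot\tribar_p$ of \eqref{eqn:normhp} by its lumped counterpart \eqref{eqn:normhbarp}. Since the only analytic inputs to the original argument are the Mittag--Leffler bound \eqref{M-L-bound} and elementary spectral facts, no genuinely new estimate is needed.

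First I would expand the left-hand side using the definition \eqref{eqn:Ftilh} of $\Ftilh$, the definition \eqref{eqn:normhbarp} of the norm, and the $(\cdot,\cdot)_h$-orthonormality of $\{\bar{\fy}_j^h\}$, obtaining
\[
 \tribar \Ftilh(t)\psi\tribar_p^2
   = t^{-2+2\al}\sum_{j=1}^N E_{\al,\al}^2(-\bar{\la}_j^h t^\al)\,(\bar{\la}_j^h)^p\,(\psi,\bar{\fy}_j^h)_h^2 .
\]
For the range $p-2\le q\le p$ I would write $(\bar{\la}_j^h)^p=(\bar{\la}_j^h)^q\,(\bar{\la}_j^h t^\al)^{p-q}\,t^{\al(q-p)}$, apply the bound \eqref{M-L-bound} of Lemma~\ref{lem:mlfbdd} to $E_{\al,\al}(-\bar{\la}_j^h t^\al)$ (the argument lies on the negative real axis, hence inside the admissible sector), and factor out the supremum over $j$, which yields
\[
 \tribar \Ftilh(t)\psi\tribar_p^2
   \le C\,t^{-2+\al(2+q-p)}\,
      \max_j \frac{(\bar{\la}_j^h t^\al)^{p-q}}{(1+\bar{\la}_j^h t^\al)^2}\,
      \sum_{j=1}^N (\bar{\la}_j^h)^q (\psi,\bar{\fy}_j^h)_h^2 .
\]
Since $0\le p-q\le 2$, Young's inequality shows the displayed maximum is bounded uniformly in $j$ and $t$, and the remaining sum is exactly $\tribar\psi\tribar_q^2$; this gives the first branch of the claimed estimate.

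For the case $p<q$ the exponent $p-q$ is negative, so the factor $(\bar{\la}_j^h t^\al)^{p-q}$ is controlled as soon as $\bar{\la}_j^h$ is bounded away from zero uniformly in $h$, and the crude bound $E_{\al,\al}^2\le C$ then produces the $t^{-1+\al}$ branch. The one point that is not a pure transcription, and hence the main thing to verify, is this uniform lower bound on the lumped mass eigenvalues $\bar{\la}_j^h$. I would establish it from the norm equivalence of Lemma~\ref{lem:normequivhbarp} together with the Poincar\'e inequality on $X_h\subset H_0^1(\Om)$: since $\tribar\psi\tribar_0\le 2\|\psi\|$ while $\|\nabla\psi\|^2\ge \la_1\|\psi\|^2$, the smallest eigenvalue satisfies
\[
 \bar{\la}_1^h=\min_{\psi\in X_h}\frac{\|\nabla\psi\|^2}{\tribar\psi\tribar_0^2}\ge \frac{\la_1}{4},
\]
independently of $h$. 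With this lower bound in hand the argument closes exactly as in Lemma~\ref{lemma-reg-f}.
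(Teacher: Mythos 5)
Your proof is correct and follows essentially the same route as the paper, whose proof simply states that the argument of Lemma \ref{lemma-reg-f} carries over upon replacing the eigenpairs $(\la_j^h,\fy_j^h)$ by $(\bar{\la}_j^h,\bar{\fy}_j^h)$ and the inner product $(\cdot,\cdot)$ by $(\cdot,\cdot)_h$. Your explicit verification that the lumped mass eigenvalues are bounded away from zero uniformly in $h$ (via Lemma \ref{lem:normequivhbarp} and the Poincar\'e inequality), needed for the branch $p<q$, is a correct filling-in of a detail the paper leaves implicit.
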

\begin{proof}
The proof essentially follows the steps of the proof of Lemma \ref{lemma-reg-f} by
replacing the eigenpairs $ ({\la}_j^h, {\fy}_j^h )$ by
 $(\bar{\la}_j^h,\bar{\fy}_j^h)$, and the $L_2$-inner product $(\cdot,\cdot)$ by the
approximate $L_2$-inner product $(\cdot,\cdot)_h$
and thus it is omitted.
\hfill
\end{proof}

We need the quadrature error operator $Q_h: X_h\rightarrow X_h$ defined by
\begin{equation}\label{eqn:Q}
  (\nabla Q_h\chi,\nabla \psi) = \epsilon_h(\chi,\psi)
       : = (\chi,\psi)_h-(\chi,\psi)\quad \forall \chi,\psi\in X_h.
\end{equation}
The operator $Q_h$, introduced in \cite{chatzipa-l-thomee12}, represents the quadrature
error (due to mass lumping) in a special way. It satisfies the following error estimate:
\begin{lemma}\label{lem:Q}
Let $\bDelh$ and $Q_h$ be the operators defined by \eqref{eqn:bDelh} and \eqref{eqn:Q},
respectively. Then
\begin{equation*}
  \|\nabla Q_h\chi\|+h\|\bDelh Q_h\chi\|\leq Ch^{p+1}\|\nabla^p\chi\|
\quad \forall \chi\in X_h, ~~~p=0,1.
\end{equation*}
\end{lemma}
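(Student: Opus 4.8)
The plan is to prove the two estimates by testing the defining relation \eqref{eqn:Q} for $Q_h$ against suitable elements of $X_h$ and then bounding the quadrature error functional $\epsilon_h(\chi,\psi)$. The central analytical input is the standard quadrature-error bound for the mass-lumping rule \eqref{quadrature}. For piecewise linear finite elements on a quasi-uniform triangulation, the functional $\epsilon_h(\chi,\psi)=(\chi,\psi)_h-(\chi,\psi)$ is known (cf. \cite[Chapter 15]{Thomee97} and \cite{chatzipa-l-thomee12}) to satisfy an estimate of the form
\begin{equation*}
 |\epsilon_h(\chi,\psi)| \le C h^{2}\,\|\nabla\chi\|\,\|\nabla\psi\|\quad\forall\chi,\psi\in X_h,
\end{equation*}
and, exploiting that one factor can be differentiated, the sharper local bound $|\epsilon_h(\chi,\psi)|\le Ch^{p+1}\|\nabla^p\chi\|\,\|\nabla\psi\|$ for $p=0,1$. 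I would take this quadrature estimate as the starting ingredient, since it is the genuinely geometric fact underlying the whole lemma.

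First I would prove the gradient bound $\|\nabla Q_h\chi\|\le Ch^{p+1}\|\nabla^p\chi\|$. Setting $\psi=Q_h\chi$ in \eqref{eqn:Q} gives
\begin{equation*}
 \|\nabla Q_h\chi\|^2 = (\nabla Q_h\chi,\nabla Q_h\chi)=\epsilon_h(\chi,Q_h\chi)\le Ch^{p+1}\|\nabla^p\chi\|\,\|\nabla Q_h\chi\|,
\end{equation*}
and dividing by $\|\nabla Q_h\chi\|$ yields the claim for $p=0,1$ at once. Next I would treat the second term $h\|\bDelh Q_h\chi\|$. Using the definition \eqref{eqn:bDelh} of $-\bDelh$ with respect to $(\cdot,\cdot)_h$, I would write $\|\bDelh Q_h\chi\|$ in the lumped norm and pass to its equivalence with the ordinary $L_2$ norm via Lemma \ref{lem:normequivhbarp}; concretely I would estimate $\tribar \bDelh Q_h\chi\tribar_{0}$ by duality, pairing $\bDelh Q_h\chi$ against an arbitrary $\chi'\in X_h$ through $(\bDelh Q_h\chi,\chi')_h=-(\nabla Q_h\chi,\nabla\chi')$ from \eqref{eqn:bDelh}, bounding this by $\|\nabla Q_h\chi\|\,\|\nabla\chi'\|$ and invoking the inverse inequality $\|\nabla\chi'\|\le Ch^{-1}\|\chi'\|$ to absorb the gradient on the test function. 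This produces $\|\bDelh Q_h\chi\|\le Ch^{-1}\|\nabla Q_h\chi\|\le Ch^{-1}\cdot Ch^{p+1}\|\nabla^p\chi\|=Ch^{p}\|\nabla^p\chi\|$, so that $h\|\bDelh Q_h\chi\|\le Ch^{p+1}\|\nabla^p\chi\|$, matching the gradient bound.

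The step I expect to be the main obstacle is the quadrature estimate itself for $p=1$, i.e. justifying that $\epsilon_h(\chi,\psi)$ gains an extra power of $h$ when $\chi$ carries a gradient; this requires the element-wise Bramble--Hilbert analysis of the lumping rule \eqref{quadrature} on piecewise linears and the quasi-uniformity of $\T_h$, rather than anything specific to the fractional operator. Since this estimate is classical and documented in the references, I would cite \cite[Chapter 15]{Thomee97} and \cite{chatzipa-l-thomee12} for it and present only the short algebraic derivation above, which is the part genuinely specific to the operator $Q_h$ introduced here.
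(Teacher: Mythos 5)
Your proof is correct, but note that the paper itself contains no argument for this lemma: its ``proof'' is the single citation \cite[Lemma 2.4]{chatzipa-l-thomee12}, so what you have produced is a reconstruction of that reference's proof. Your treatment of the first term is exactly the reference's: test \eqref{eqn:Q} with $\psi=Q_h\chi$ and invoke the classical quadrature-error bound $|\epsilon_h(\chi,\psi)|\le Ch^{1+p}\|\nabla^p\chi\|\,\|\nabla\psi\|$, $p=0,1$, for the lumping rule on piecewise linears (this is \cite[Lemma 2.3]{chatzipa-l-thomee12}, see also \cite[Chapter 15]{Thomee97}); you are right that this bound is the genuinely geometric content, and citing it rather than redoing the Bramble--Hilbert argument is appropriate. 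For the term $h\|\bDelh Q_h\chi\|$ you argue by duality in $(\cdot,\cdot)_h$, the inverse inequality, and the norm equivalence of Lemma \ref{lem:normequivhbarp}, to get $\|\bDelh Q_h\chi\|\le Ch^{-1}\|\nabla Q_h\chi\|$ and then bootstrap from the gradient bound; this is sound, since $\tribar\cdot\tribar_0$ is the norm induced by $(\cdot,\cdot)_h$ and $\|\cdot\|\le\tribar\cdot\tribar_0$ on $X_h$. The reference instead applies the quadrature bound a second time, testing \eqref{eqn:Q} with $\psi=-\bDelh Q_h\chi$ so that
\begin{equation*}
  \tribar\bDelh Q_h\chi\tribar_0^2=\epsilon_h(\chi,-\bDelh Q_h\chi)
  \le Ch^{1+p}\|\nabla^p\chi\|\,\|\nabla\bDelh Q_h\chi\|
  \le Ch^{p}\|\nabla^p\chi\|\,\tribar\bDelh Q_h\chi\tribar_0 ,
\end{equation*}
and concludes with the same inverse inequality and norm equivalence. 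The two routes differ only in whether $\epsilon_h$ is estimated once (yours, reusing the gradient bound) or twice; both yield the stated $O(h^{p+1})$ estimates, and yours has the minor advantage of confining all use of the quadrature bound to a single step.
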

\begin{proof}
See \cite[Lemma 2.4]{chatzipa-l-thomee12}.
\hfill
\end{proof}

\subsection{Error estimate for smooth initial data}
We shall now establish error estimates for the lumped mass FEM for smooth initial data,
i.e., $v\in\dot H^2(\Om)$.
\begin{theorem}\label{lumped-mass-smooth}
Let $u$ and $\luh$ be the solutions of \eqref{eq1} and  \eqref{fem-lumped}, respectively, with
$v_h=R_hv$. Then
\begin{equation*}
  \|\luh(t)-u(t)\|+h\|\nabla(\luh(t)-u(t))\|\leq Ch^2|v|_2.
\end{equation*}
\end{theorem}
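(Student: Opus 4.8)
The plan is to mirror the argument already carried out for the standard Galerkin method in Theorem~\ref{SG-bound-smooth}, adapting each ingredient to the lumped mass setting. Since $v\in\dot H^2(\Om)$, the Ritz projection $R_hv$ is well defined, so I would split the error as $\luh-u=(\luh-R_hu)+(R_hu-u):=\vth+\rh$. The term $\rh=R_hu-u$ is handled exactly as in \eqref{rho-bound}: by the Ritz bound \eqref{rh-bound} and the smoothing estimate \eqref{DE_smoothing} with $q=2$ one gets $\|\rh(t)\|+h\|\nabla\rh(t)\|\le Ch^2|v|_2$. Thus the whole task reduces to bounding $\vth(t)=\luh(t)-R_hu(t)$.

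The heart of the proof is deriving and solving the error equation for $\vth$. I would substitute $\luh=\vth+R_hu$ into the operator form $\Dal\luh-\bDelh\luh=\Pbarh f$ and use the defining relations for $\bDelh$, $\Pbarh$, and $R_h$ to isolate the residual produced by mass lumping. The crucial point is that the quadrature error enters through the operator $Q_h$ of \eqref{eqn:Q}: after manipulation one expects an identity of the schematic form $\Dal\vth-\bDelh\vth = -\Pbarh\Dal\rh + (\text{a }Q_h\text{-term})$, where the $Q_h$-term captures the difference between $(\cdot,\cdot)_h$ and $(\cdot,\cdot)$. Because $\vth(0)=0$, Duhamel's principle then represents $\vth$ as a convolution against $\Ftilh$ of these two source terms. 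The first source, involving $\Pbarh\Dal\rh$, is controlled exactly as in \eqref{nabla-E_h}--\eqref{L2-smooth}, using the smoothing Lemma~\ref{lem:Ftilh} (the lumped analogue of Lemma~\ref{lemma-reg-f}), the Ritz bound, and the regularity \eqref{DE_smoothing} with $\ell=1$; the stability of $\Pbarh$ on $L_2$ plays the role that stability of $P_h$ did before.

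The genuinely new obstacle is the $Q_h$-term, which has no counterpart in the standard Galerkin analysis. Here I would invoke the quadrature error estimate of Lemma~\ref{lem:Q}, $\|\nabla Q_h\chi\|+h\|\bDelh Q_h\chi\|\le Ch^{p+1}\|\nabla^p\chi\|$, applied with $\chi$ related to $\Dal R_hu$ or $\Dal u_h$. The extra power of $h$ supplied by Lemma~\ref{lem:Q} is exactly what is needed to absorb the loss of one spatial derivative and still reach the optimal order $O(h^2)$. For the $L_2$ bound I would use Lemma~\ref{lem:Ftilh} with $p=q=0$ together with the time-integrability of $(t-s)^{\al-1}s^{-\al}$, which yields a finite Beta function $B(\al,1-\al)$; for the gradient bound I would use Lemma~\ref{lem:Ftilh} with $p=1$, producing a factor $(t-s)^{\al/2-1}$ and, after integration, the Beta function $B(\tfrac{\al}{2},\al(-\tfrac32+1)+1)$, finite for $\al<1$. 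Combining the $\vth$ estimates with the $\rh$ bound via the triangle inequality gives the claim.

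I expect the main difficulty to be bookkeeping the precise form of the error equation, that is, correctly identifying the $Q_h$-dependent source term and verifying that the regularity of $\Dal u$ available from Theorem~\ref{thm:fdereg} is compatible with the derivative count demanded by Lemma~\ref{lem:Q}. The time-singular weights $s^{-\al}$ arising from \eqref{DE_smoothing} must be integrable against the $\Ftilh$-kernel, and one must check, as in the smooth Galerkin case, that the relevant Beta-function arguments stay positive; this is where the restriction $\al<1$ and the smoothness $v\in\dot H^2$ are used. The norm equivalence of Lemma~\ref{lem:normequivhbarp} lets me pass freely between $\tribar\cdot\tribar_p$ and $|\cdot|_p$ for $p=0,1$, so the final estimates in $L_2$ and $H^1$ follow without further complication.
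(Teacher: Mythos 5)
Your decomposition goes through the Ritz projection, $\luh-u=(\luh-R_hu)+(R_hu-u)$, whereas the paper goes through the standard Galerkin solution, $\luh-u=(\luh-u_h)+(u_h-u)$, so that Theorem~\ref{SG-bound-smooth} can be cited for the second term and the error equation for $\delta=\luh-u_h$ carries the single source $\bDelh Q_h\Dal u_h$. Your route is workable in principle, and your guessed error equation is correct: testing the two schemes and using the definitions of $R_h$, $\Pbarh$ and $Q_h$ gives $\Dal\vth-\bDelh\vth=-\Pbarh\Dal\rh+\bDelh Q_h\Dal R_hu$ with $\vth(0)=0$, and the $\Pbarh\Dal\rh$ source is indeed handled exactly as in the Galerkin proof. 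The genuine gap is in your $L_2$ bound for the quadrature source. You propose Lemma~\ref{lem:Ftilh} with $p=q=0$, which forces you to estimate $\|\bDelh Q_h\chi\|$ with $\chi$ essentially equal to $\Dal R_hu(s)$; but Lemma~\ref{lem:Q} yields only $\|\bDelh Q_h\chi\|\le Ch\|\nabla\chi\|$, a single power of $h$. The resulting contribution to $\|\vth(t)\|$ is $O(h)$, not $O(h^2)$, and no choice of $p$ in Lemma~\ref{lem:Q} repairs this, since $\|\bDelh Q_h\chi\|\le Ch^{p}\|\nabla^p\chi\|$ never supplies more than one power of $h$. So the $L_2$ half of the theorem does not follow from the plan as stated.

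The missing idea --- the step the paper's proof actually turns on --- is to pay for the extra power of $h$ with a negative-order norm rather than with the $L_2$ norm of $\bDelh Q_h\chi$. Apply Lemma~\ref{lem:Ftilh} with $p=0$, $q=-1$ (the lemma was stated for arbitrary real $q$ precisely to allow this), so that
\begin{equation*}
\|\Ftilh(t-s)\bDelh Q_h\chi\|\le C(t-s)^{\frac{\al}{2}-1}\tribar\bDelh Q_h\chi\tribar_{-1}
=C(t-s)^{\frac{\al}{2}-1}\|\nabla Q_h\chi\|\le C(t-s)^{\frac{\al}{2}-1}h^2\|\nabla\chi\|,
\end{equation*}
where the identity $\tribar\bDelh\psi\tribar_{-1}=\tribar\psi\tribar_{1}=\|\nabla\psi\|$ (cf.\ Lemma~\ref{lem:normequivhbarp}) lets the full second-order estimate $\|\nabla Q_h\chi\|\le Ch^2\|\nabla\chi\|$ of Lemma~\ref{lem:Q} enter. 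With $\chi=\Dal R_hu(s)=R_h\Dal u(s)$, the $H^1$-stability of the Ritz projection and Theorem~\ref{thm:fdereg} ($\ell=1$, $p=1$, $q=2$) give $\|\nabla\chi\|\le Cs^{-\al/2}|v|_2$, and the integral $\int_0^t(t-s)^{\frac{\al}{2}-1}s^{-\frac{\al}{2}}\,ds=B(\tfrac{\al}{2},1-\tfrac{\al}{2})$ is finite, which restores the $O(h^2)$ bound; the paper runs the same computation with $\chi=\Dal u_h(s)$, using Lemma~\ref{lemma-reg} and $\tribar u_h(0)\tribar_{2}=\|P_h\Delta v\|\le|v|_2$. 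Your gradient estimate, on the other hand, is fine as proposed: with $p=1$, $q=0$ and $\|\bDelh Q_h\chi\|\le Ch\|\nabla\chi\|$ one gets the $O(h)$ bound for $\|\nabla\vth\|$ that the theorem requires.
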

\begin{proof}
Now we split the error into $\luh(t)-u(t) = u_h(t)- u(t) + \delta(t)$ with $  \delta(t) =
\luh(t)-u_h(t)$ and $u_h(t)$ being the solution by the standard Galerkin FEM.
Since we have already established the estimate \eqref{bound-smooth}
for $u_h -u$, it suffices to get the following estimate for $\delta(t)$:
\begin{equation}\label{delta-bound-smooth}
  \|\delta(t)\|+h\|\nabla\delta(t)\|\leq Ch^2|v|_2.
\end{equation}
It follows from the definitions of the $u_h(t)$, $\luh(t)$, and $Q_h$ that
\begin{equation*}
  \Dal \delta(t) - \bDelh \delta(t) = \bDelh Q_h\Dal u_h(t) \quad \mbox{ for } t > 0, \quad \delta(0)=0
\end{equation*}
and by Duhamel's principle we have
\begin{equation*}
  \delta(t) = \int_0^t \Ftilh (t-s)\bDelh Q_h \Dal u_h(s)ds.
\end{equation*}
Using Lemmas \ref{lem:normequivhbarp}, 
\ref{lem:Ftilh},  and \ref{lem:Q} we get for $ \chi\in X_h$:
\begin{equation*}
  \begin{aligned}
    \|\nabla \Ftilh(t)\bDelh Q_h\chi\| &\leq Ct^{\frac{\al}{2}-1}\|\bDelh Q_h\chi\|
     & \leq Ct^{\frac{\al}{2}-1}h\|\nabla \chi\|. 
  \end{aligned}
\end{equation*}
Similarly, for $ \chi\in X_h$
\begin{equation*}
  \begin{aligned}
    \|\Ftilh(t)\bDelh Q_h\chi\| &\leq Ct^{\frac{\al}{2}-1}\tribar \bDelh Q_h\chi\tribar_{-1}
     \leq Ct^{\frac{\al}{2}-1}\|\nabla Q_h\chi\|
     \leq Ct^{\frac{\al}{2}-1}h^2\|\nabla\chi\|. 
  \end{aligned}
\end{equation*}
Consequently, using Lemma \ref{lemma-reg} with $l=1$, $p=1$ and $q=2$ we get
\begin{equation*}
  \begin{aligned}
    \|\delta(t)\|+h\|\nabla\delta(t)\|& \leq Ch^2 \int_0^t (t-s)^{\frac{\al}{2}-1}\tribar \Dal u_h(s)\tribar_{1}ds\\
      & \leq Ch^2 \int_0^t (t-s)^{\frac{\al}{2}-1}s^{-\frac{\al}{2}}ds \, \tribar u_h(0) \tribar_{2}.
  \end{aligned}
\end{equation*}
Since $\Delta_h R_h = P_h\Delta$, we deduce
\begin{equation*}
  \tribar u_h (0)\tribar_{2}= \|\Delta_h R_h u (0)\|=\|P_h\Delta u (0) \|\leq | u (0)|_2 \le C \|v\|_2,
\end{equation*}
which yields \eqref{delta-bound-smooth} and concludes the proof.
\hfill
\end{proof}
%

%
An improved bound for  $\|\nabla\delta(t)\|$  can be obtained as follows. In view of
Lemmas \ref{lem:normequivhbarp} and \ref{lem:Q} and \eqref{inverse:hbarp}, we observe
that for any $\epsilon>0$ and $ \chi\in X_h$
\begin{equation*}
  \begin{aligned}
    \|\nabla \Ftilh(t)\bDelh Q_h\chi\| &\leq Ct^{\frac{\epsilon}{2}\al-1}\tribar \bDelh Q_h\chi\tribar_{-1+\epsilon}
      \leq Ct^{\frac{\epsilon}{2}\al-1}h^{2-\epsilon}\|\nabla \chi\|. 
  \end{aligned}
\end{equation*}
Consequently,
\begin{equation}\label{grad-improved}
  \begin{aligned}
     \|\nabla\delta(t)\|& \leq Ch^{2-\epsilon} \int_0^t (t-s)^{\frac{\epsilon}{2}\al-1}\tribar \Dal u_h(s)\tribar_{1}ds. 
  \end{aligned}
\end{equation}
Now, to \eqref{grad-improved} we apply Lemma \ref{lemma-reg} with $l=1$, $p=1$ and $q=2$ to get
$$
 \|\nabla\delta(t)\| \leq Ch^{2-\epsilon} \int_0^t (t-s)^{\frac{\epsilon}{2}\al-1}s^{-\frac{\al}{2}}ds \tribar u_h(0) \tribar_{2}
\le C \frac{1}{\epsilon} h^{2-\epsilon}t^{-\al\frac{1-\epsilon}{2}} |v|_2.
$$
\begin{remark}\label{super-lumped}
In the above estimate, by choosing $\epsilon=1/\ell_h$, $\ell_h=|\ln h|$, we get
\begin{equation}\label{better}
     \|\nabla\delta(t)\|\leq Ch^2\ell_ht^{-\frac{\al}{2}}|v|_2,
\end{equation}
which improves the bound of $\|\nabla\delta(t)\|$ for fixed $t>0$ by almost
one order. 
\end{remark}
\begin{remark}
Instead, if we apply to \eqref{grad-improved} Lemma \ref{lemma-reg} with $l=1$, $p=1$ and $q=1$ we shall get
an improved estimate for  $\delta(t)$ in the case of initial data $v \in \dot H^1$:
\begin{equation}\label{better-1}
   \|\nabla\delta(t)\|\leq Ch^2\ell_ht^{-\al}|v|_1.
\end{equation}
\end{remark}

\subsection{Error estimates for nonsmooth initial data}
Now we consider the case of nonsmooth initial data, i.e., $v\in
L_2(\Omega)$ as well as the intermediate case $v\in \dot H^1$. Due to the lower
regularity, we take $v_h=P_hv$. As before, the idea is to split the error into
$\luh(t)-u(t)=u_h(t) -u(t) + \delta(t)$ with $\delta(t)=\luh(t)-u_h(t)$ and $u_h(t)$
being the solution of the standard Galerkin FEM. Thus, in view of estimate \eqref{Galerkin-nonsmooth}
it suffices to establish proper bound for $\delta(t)$.

\begin{theorem}\label{lumped-mass-nonsmooth}
Let $u$ and $\luh$ be the solutions of \eqref{eq1} and  \eqref{fem-lumped}, respectively,
with $v_h=P_hv$. Then with $\ell_h=|\ln h|$, the following estimates are valid for $t >0$:
\begin{equation}\label{grad}
   \begin{aligned}
     \|\nabla(\luh(t)-u(t))\|&\leq Ch\ell_ht^{-\al(1-\frac{q}{2})}|v|_q  \quad q=0,1,
   \end{aligned}
\end{equation}
and
\begin{equation}\label{L2}
   \begin{aligned}
     \|\luh(t)-u(t)\| &\leq Ch^{q+1}\ell_ht^{-\al(1 -\frac{q}{2})}|v|_q\quad q=0,1.
   \end{aligned}
\end{equation}
Furthermore, if the quadrature error operator $Q_h$ defined by \eqref{eqn:Q} satisfies
\begin{equation}\label{eqn:condQ}
\|Q_h\chi\|\leq Ch^2\|\chi\|\quad \forall\chi \in X_h,
\end{equation}
then the following almost optimal error estimate is valid:
\begin{equation}\label{L2-improved}
   \|\luh(t)-u(t)\|\leq Ch^2 \ell_h t^{-\al} \|v\|.
\end{equation}
\end{theorem}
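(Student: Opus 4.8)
The plan is to prove Theorem \ref{lumped-mass-nonsmooth} by the same error-splitting strategy used for the smooth case, writing $\luh(t)-u(t)=(u_h(t)-u(t))+\delta(t)$ with $\delta(t)=\luh(t)-u_h(t)$. Since the standard Galerkin estimate \eqref{Galerkin-nonsmooth} already controls $u_h-u$ with the rates $Ch^2\ell_h t^{-\al(1-q/2)}\|v\|_q$ in $L_2$ and $Ch\ell_ht^{-\al(1-q/2)}\|v\|_q$ in $H^1$, everything reduces to bounding $\delta(t)$, which satisfies the same evolution equation as in the smooth case, namely
\begin{equation*}
  \Dal\delta(t)-\bDelh\delta(t)=\bDelh Q_h\Dal u_h(t),\quad t>0,\quad \delta(0)=0,
\end{equation*}
so that by Duhamel $\delta(t)=\int_0^t\Ftilh(t-s)\bDelh Q_h\Dal u_h(s)\,ds$. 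The whole theorem is then a matter of estimating this convolution integral under the weaker regularity of the data.

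For the gradient bound \eqref{grad} I would first estimate $\|\nabla\Ftilh(t)\bDelh Q_h\chi\|$. Using Lemma \ref{lem:normequivhbarp} to pass to the discrete norm $\tribar\cdot\tribar_1$, then Lemma \ref{lem:Ftilh} with $p=1$ and a well-chosen $q=-1+\epsilon$ so as to gain a factor $(t-s)^{\frac{\epsilon}{2}\al-1}$, and finally Lemma \ref{lem:Q} together with the inverse inequality \eqref{inverse:hbarp}, I expect
\begin{equation*}
  \|\nabla\Ftilh(t)\bDelh Q_h\chi\|\le Ct^{\frac{\epsilon}{2}\al-1}h^{2-\epsilon}\|\nabla\chi\|,
\end{equation*}
exactly as in the display preceding \eqref{grad-improved}. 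Substituting $\chi=\Dal u_h(s)$ and invoking Lemma \ref{lemma-reg} with $\ell=1$, $p=1$, $q$ now taken equal to the data regularity index (so $\tribar\Dal u_h(s)\tribar_1\le Cs^{-\al(1+\frac{1-q}{2})}\tribar u_h(0)\tribar_q$), I would integrate the resulting Beta-type kernel and choose $\epsilon=1/\ell_h$ to convert $h^{-\epsilon}$ into the logarithmic factor $\ell_h$. Since $\tribar v_h\tribar_q=\tribar P_hv\tribar_q\le C|v|_q$ for $q=0,1$ (the $q=0$ case from $L_2$-stability of $P_h$ and $q=1$ from its $\dot H^1$-stability in Lemma \ref{lem:prh-bound}), this yields the $O(h\ell_h t^{-\al(1-q/2)})$ bound on $\|\nabla\delta\|$, hence \eqref{grad} after the triangle inequality. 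The $L_2$ bound \eqref{L2} follows in parallel: estimating $\|\Ftilh(t)\bDelh Q_h\chi\|$ via $\tribar\bDelh Q_h\chi\tribar_{-1}\le\|\nabla Q_h\chi\|\le Ch^{q+1}\|\nabla^q\chi\|$ from Lemma \ref{lem:Q} gives one extra power of $h$ relative to the gradient estimate, producing the $h^{q+1}$ rate.

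For the improved estimate \eqref{L2-improved} the key new ingredient is the sharpened quadrature bound \eqref{eqn:condQ}, $\|Q_h\chi\|\le Ch^2\|\chi\|$, which replaces the $H^1$-based Lemma \ref{lem:Q} and removes the loss incurred there. I would estimate $\|\Ftilh(t)\bDelh Q_h\chi\|$ by writing $\bDelh Q_h\chi$ and pairing Lemma \ref{lem:Ftilh} (again with a small $\epsilon$ to gain the integrable singularity $(t-s)^{\frac{\epsilon}{2}\al-1}$) against $\tribar\bDelh Q_h\chi\tribar_{-2+\epsilon}$; using $\tribar\bDelh\,\cdot\,\tribar_{-2+\epsilon}\le C\tribar\,\cdot\,\tribar_{\epsilon}\le Ch^{-\epsilon}\|Q_h\chi\|$ via \eqref{inverse:hbarp} and then \eqref{eqn:condQ} should give $\|\Ftilh(t)\bDelh Q_h\chi\|\le Ct^{\frac{\epsilon}{2}\al-1}h^{2-\epsilon}\|\chi\|$. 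Substituting $\chi=\Dal u_h(s)$ and bounding $\|\Dal u_h(s)\|$ by Lemma \ref{lemma-reg} with $\ell=1$, $p=0$, $q=0$ (cost $s^{-\al}$), integrating, and again choosing $\epsilon=1/\ell_h$, I expect $\|\delta(t)\|\le Ch^2\ell_ht^{-\al}\|v\|$, matching the $u_h-u$ term in \eqref{Galerkin-nonsmooth} for $q=0$ and giving \eqref{L2-improved}.

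I expect the main obstacle to be the correct bookkeeping of the discrete smoothing exponents: one must verify that the indices fed into Lemma \ref{lem:Ftilh} satisfy its hypothesis $p-2\le q\le p$ (so that the exponent is $-1+\al(1+\frac{q-p}{2})$ and the resulting time singularity $(t-s)^{\frac{\epsilon}{2}\al-1}$ is integrable near $s=t$), while simultaneously the exponent coming from Lemma \ref{lemma-reg} for $\tribar\Dal u_h(s)\tribar_1$ must keep $s^{-\cdots}$ integrable near $s=0$ for each admissible $q\in\{0,1\}$. The delicate point is that for nonsmooth data $\Dal u_h$ is genuinely singular as $s\to0$, so the $\epsilon$ used to regularize the $(t-s)$ endpoint must be small enough not to destroy integrability at the $s=0$ endpoint; tracking the Beta function $B(\frac{\epsilon}{2}\al,\,\cdot\,)\sim C/\epsilon$ as $\epsilon\to0^+$ and balancing it against $h^{-\epsilon}$ by setting $\epsilon=1/\ell_h$ is precisely what converts the $1/\epsilon$ blow-up into the logarithmic factor, and getting that balance right in all cases is the crux of the argument.
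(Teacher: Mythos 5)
Your overall strategy coincides with the paper's: the splitting $\luh-u=(u_h-u)+\delta$, the Duhamel representation $\delta(t)=\int_0^t\Ftilh(t-s)\bDelh Q_h\Dal u_h(s)\,ds$, your treatment of \eqref{L2} (Lemma \ref{lem:Ftilh} with $q=-1$, then $\tribar\bDelh Q_h\chi\tribar_{-1}=\|\nabla Q_h\chi\|\le Ch^{q+1}\|\nabla^q\chi\|$), and your treatment of \eqref{L2-improved} (small $\epsilon$, inverse inequality, \eqref{eqn:condQ}, $\epsilon=1/\ell_h$) are exactly the paper's arguments and are sound. The genuine gap is in your proof of the gradient bound \eqref{grad} for nonsmooth data, $q=0$. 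There you fix the index in Lemma \ref{lem:Q} at $p=1$ (to gain $h^{2-\epsilon}$) and compensate by demanding $\tribar\Dal u_h(s)\tribar_{1}\le Cs^{-\al(1+\frac{1}{2})}\tribar u_h(0)\tribar_{0}$, i.e.\ Lemma \ref{lemma-reg} with $\ell=1$, $p=1$, $q=0$. But for $\ell=1$ that lemma requires $p\le q\le p+2$, and $p=1>q=0$ is excluded. This is not a removable hypothesis: the estimate is false. Writing it out, one would need $\sup_j(\la_j^h s^\al)^{3}/(1+\la_j^h s^\al)^{2}\le C$, and since the Mittag-Leffler function decays only linearly, this supremum grows like $\max_j\la_j^h s^\al\sim s^\al h^{-2}$, so the "bound" hides a factor of order $h^{-1}$ that destroys the claimed rate — precisely the limited-smoothing phenomenon flagged in the remark following Theorem \ref{thm:fdereg}. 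Moreover, even formally, the resulting time singularity $s^{-3\al/2}$ is not integrable at $s=0$ once $\al\ge\frac{2}{3}$, so the convolution integral itself diverges; this is the balancing problem you identified at the end, and it cannot be repaired by any choice of $\epsilon$.

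The repair is what the paper does: tie the index in Lemma \ref{lem:Q} to the data regularity instead of fixing it at $1$. From \eqref{eqn:lumpbasic} with $p=1$ one has $\|\nabla\Ftilh(t-s)\bDelh Q_h\chi\|\le C(t-s)^{\frac{\epsilon}{2}\al-1}h^{-\epsilon}\|\nabla Q_h\chi\|$, and then Lemma \ref{lem:Q} gives $\|\nabla Q_h\chi\|\le Ch^{q+1}\|\nabla^q\chi\|$ for $q=0,1$; with $\chi=\Dal u_h(s)$ this requires only $|\Dal u_h(s)|_q$, which the diagonal choice $p=q$ in Lemma \ref{lemma-reg} (always admissible) bounds by $Cs^{-\al}\tribar P_hv\tribar_q\le Cs^{-\al}|v|_q$, using the $L_2$- and $\dot H^1$-stability of $P_h$. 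Integrating and taking $\epsilon=1/\ell_h$ for $q=0$ (and $\epsilon=1$ for $q=1$) yields $\|\nabla\delta(t)\|\le Ch^{q+1}\ell_h t^{-\al}|v|_q$-type bounds — only first order in $h$ when $q=0$, which is exactly why \eqref{grad} asserts the rate $h$ rather than the $h^2$ your argument would suggest. For $q=1$ your route is legitimate, since there $p=q=1$ is admissible in Lemma \ref{lemma-reg}, and it in fact reproduces the sharper estimate \eqref{better-1} of the paper.
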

\begin{proof}
By Duhamel's principle
\begin{equation*}
  \delta(t) = \int_0^t \Ftilh (t-s)\bDelh Q_h \Dal u_h(s)ds.
\end{equation*}
Then by appealing to the smoothing property of the operator $\Ftilh$ in Lemma \ref{lem:Ftilh} and the inverse
inequality \eqref{inverse:hbarp}, we get for $\chi\in X_h $, $\epsilon>0$, and $p=0,1$
\begin{equation}\label{eqn:lumpbasic}
  \begin{aligned}
    \tribar \Ftilh(t)\bDelh Q_h\chi\tribar_{p} &\leq Ct^{\frac{\epsilon}{2}\al-1} \tribar \bDelh Q_h\chi\tribar_{p-2+\epsilon}\\
        & = Ct^{\frac{\epsilon}{2}\al-1} \tribar Q_h\chi\tribar_{p+\epsilon}\\
        & \leq Ct^{\frac{\epsilon}{2}\al-1}h^{-\epsilon}\tribar Q_h \chi\tribar_{p}\\
        & \leq Ct^{\frac{\epsilon}{2}\al-1}h^{-\epsilon} \|Q_h\chi\|_{p}.
  \end{aligned}
\end{equation}
Consequently, by Lemmas \ref{lem:Q}, \ref{lemma-reg} and $\dot H^1$- and $L_2$-stability
of the operator $P_h$ from Lemma \ref{lem:prh-bound}, we deduce for $q=0,1$
\begin{equation*}
  \begin{aligned}
     \|\nabla\delta(t)\| &\leq Ch^{q+1-\epsilon} \int_0^t (t-s)^{\frac{\epsilon}{2}\al-1}\|\Dal u_h(s)\|_{q}ds\\
      & \leq Ch^{q+1-\epsilon} \int_0^t (t-s)^{\frac{\epsilon}{2}\al-1}s^{-\al}ds \|u_h(0)\|_q\\
      & =Ch^{q+1-\epsilon}t^{-\al(1-\frac{\epsilon}{2})}B\left(\frac{\epsilon}{2}\al,1-\al\right) \|P_h v\|_q\\
      & \leq C \frac{1}{\epsilon} h^{q+1-\epsilon}t^{-\al(1-\frac{\epsilon}{2})} |v|_q.  
  \end{aligned}
\end{equation*}
Now the desired estimate \eqref{grad} follows by triangle inequality from  
the estimate \eqref{Galerkin-nonsmooth} and the above estimate by taking
$\epsilon=1$ and $\epsilon=1/\ell_h$ for the cases $q=1$ and $0$, respectively.

Next we derive an $L_2$- error estimate. First, note that for $\chi \in X_h $ we have
\begin{equation*}
 \|\Ftilh(t)\bDelh Q_h\chi\|
\leq Ct^{\frac{\al}{2}-1}\tribar \bDelh Q_h\chi\tribar_{-1}
\leq Ct^{\frac{\al}{2}-1}\|\nabla Q_h \chi\|.
\end{equation*}
This estimate and Lemma \ref{lem:Q} give
\begin{equation*}
   \begin{aligned}
      \|\delta(t)\| &\leq Ch^{q+1}\int_0^t(t-s)^{\frac{\al}{2}-1}\|\Dal u_h(s)\|_{q} \, ds\\
        & \leq Ch^{q+1}\int_0^t(t-s)^{\frac{\al}{2}-1}s^{-\al}ds \, |u_h(0)|_q\\
        & \leq Ch^{q+1}t^{-\frac{\al}{2}}B\left(\frac{\al}{2},1-\al\right)|P_hv|_q\\
        & \leq Ch^{q+1}t^{-\frac{\al}{2}}|v|_q, \quad q=0,1,
   \end{aligned}
\end{equation*}
which shows the desired  estimate \eqref{L2}.

Finally, if \eqref{eqn:condQ} holds, by applying \eqref{eqn:lumpbasic} with $p=0$ and
$\epsilon\in(0,\frac{1}{2})$, we get
\begin{equation*}
   \begin{aligned}
      \|\delta(t)\| &\leq Ch^{-\epsilon}\int_0^t(t-s)^{\frac{\epsilon}{2}\al-1}\|Q_h\Dal u_h(s)\| \, ds\\
        &\leq Ch^{2-\epsilon}\int_0^t(t-s)^{\frac{\epsilon}{2}\al-1}\|\Dal u_h(s)\| \, ds\\
        & \leq Ch^{2-\epsilon}\int_0^t(t-s)^{\frac{\epsilon}{2}\al-1}s^{-\al}ds|u_h(0)| \, ds\\
        & \leq C\frac{1}{\epsilon} h^{2-\epsilon} t^{-\al(1-\frac{\epsilon}{2})}  \|v\|. 
   \end{aligned}
\end{equation*}
Then \eqref{L2-improved} follows immediately by choosing $\epsilon=1/\ell_h$.
\hfill \end{proof}
\begin{remark}
The condition \eqref{eqn:condQ} on the quadrature error operator $Q_h$ is satisfied for
symmetric meshes; see \cite[Sections 5]{chatzipa-l-thomee12}.
In case the condition \eqref{eqn:condQ} does
not hold, we were able to show only a suboptimal $O(h)$-convergence rate for $L_2$-norm
of the error, which is reminiscent of the situation in the classical parabolic equation
$($see, e.g. \cite[Theorem 4.4]{chatzipa-l-thomee12}$)$.
\end{remark}
\begin{remark}\label{almost symmetric}
As we mentioned before, assumption \eqref{eqn:condQ} is valid for symmetric meshes.
In fact, in one dimension, the symmetry requirement can be relaxed
to almost symmetry \cite[Section 6]{chatzipa-l-thomee12}, and
\eqref{L2-improved} can be proven as well.
\end{remark}
\begin{remark}
We note that we have used a globally quasi-uniform meshes, while the
results in \cite{chatzipa-l-thomee12} are valid for meshes that satisfy the inverse
inequality only locally.
\end{remark}

\section{Special meshes}\label{sec:special_meshes}

Remark \ref{super} (as well as Remark \ref{super-lumped}) suggests that one can achieve a
higher order convergence rate for $ \nabla( u_h-u)$, if one can get an estimate of the
error $\nabla(R_hu-u)$ in some special norm. This could be achieved using the
superconvergence property of the gradient available for special meshes and solutions in
$H^3(\Om)$. Examples of special meshes exhibiting superconvergence property include
triangulations in which every two adjacent
triangles form a parallelogram \cite{Krizek-Neit}. To establish a super-convergent
recovery of the gradient, K\v ri\v zek and Neittaanm\"aki in \cite{Krizek-Neit}
introduced an operator of the averaged (recovered, postprocessed) gradient $G_h(R_h u)$ 
of the Ritz projection $R_h u$ of a function $u$ (see
\cite[equation (2.2)]{Krizek-Neit}) with the following properties:
\begin{enumerate}
\item[(a)] If $u \in H^3(\Om)$ then, \cite[Theorem 4.2]{Krizek-Neit}
\begin{equation}\label{recovered-grad}
\| \nabla u - G_h(R_h u) \| \le C h^2 \|u\|_{H^3(\Om)}.
\end{equation}
\item[(b)] For $ \chi \in X_h$ the following bound is valid:
\begin{equation}\label{recover-stability}
\| G_h(\chi ) \| \le C \| \nabla \chi\|.
\end{equation}
\end{enumerate}
The bound \eqref{recover-stability} follows immediately from \cite[inequality (3.4)]{Krizek-Neit} established for
a reference finite element by rescaling and using the fact that $ \chi \in X_h$.
We point out that one can get a
higher order approximation of the $\nabla u$ by $ G_h(R_h u)$ due to the special post-processing
procedure valid for sufficiently smooth solutions and special meshes.

This result could be used to establish a higher convergence rate for the semi-discrete
Galerkin method (and similarly for the lumped mass method) for smooth initial data.
Specifically, we have the following result.
\begin{theorem}\label{th:superconvergence}
 Let $\T_h$ be strongly uniform triangulation of $\Om$, that is, every two adjacent
triangles form a parallelogram. Then the following estimate is valid
\begin{equation}\label{O-h-square rate}
 \| \nabla u(t) - G_h(u_h(t)) \| \le C h^2  t^{-\al/2} \|v\|_2.
\end{equation}
\end{theorem}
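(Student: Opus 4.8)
The plan is to insert the Ritz projection $R_hu(t)$ and use the linearity of $G_h$ to split
\begin{equation*}
 \nabla u(t) - G_h(u_h(t)) = \bigl(\nabla u(t) - G_h(R_hu(t))\bigr) + G_h\bigl(R_hu(t) - u_h(t)\bigr),
\end{equation*}
so that it suffices to bound each term on the right by $Ch^2t^{-\al/2}|v|_2$ and to conclude by the triangle inequality.

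For the first term I would invoke the superconvergence property \eqref{recovered-grad} of the recovered gradient, which yields
\begin{equation*}
 \|\nabla u(t) - G_h(R_hu(t))\| \le Ch^2\|u(t)\|_{H^3(\Om)} \le Ch^2 |u(t)|_3 .
\end{equation*}
The one missing ingredient is then a smoothing estimate of the form $|u(t)|_3 \le Ct^{-\al/2}|v|_2$. This is not covered by Theorem \ref{thm:fdereg} (where $p\le 2$), but it follows from the same spectral argument: using the representation \eqref{E-oper} together with the Mittag-Leffler bound \eqref{M-L-bound} of Lemma \ref{lem:mlfbdd},
\begin{equation*}
 |u(t)|_3^2 = \sum_{j=1}^\infty \la_j^3 |\Mitaga1(-\la_jt^\al)|^2 (v,\fy_j)^2 \le Ct^{-\al}\sup_{j}\frac{\la_jt^\al}{(1+\la_jt^\al)^2}\sum_{j=1}^\infty \la_j^2 (v,\fy_j)^2 \le Ct^{-\al}|v|_2^2 ,
\end{equation*}
where one factors $\la_j^3=t^{-\al}(\la_jt^\al)\la_j^2$ and uses that $\la_jt^\al/(1+\la_jt^\al)^2$ is uniformly bounded. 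This gives $|u(t)|_3\le Ct^{-\al/2}|v|_2$ and hence the required bound on the first term.

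For the second term I would use the stability \eqref{recover-stability} of the recovery operator on $X_h$, so that
\begin{equation*}
 \|G_h(R_hu(t)-u_h(t))\| \le C\|\nabla(R_hu(t)-u_h(t))\| = C\|\nabla\vth(t)\| ,
\end{equation*}
where $\vth=u_h-R_hu$ is precisely the finite element error quantity analysed in Section \ref{sec:galerkin}. The improved gradient estimate \eqref{superconvergence} of Remark \ref{super}, namely $\|\nabla\vth(t)\|\le Ch^2t^{-\al/2}|v|_2$, controls this term at exactly the desired rate, and combining the two bounds proves \eqref{O-h-square rate}.

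The main obstacle is conceptual rather than computational: the two halves of the error are handled by entirely different mechanisms---analytic superconvergence of $G_h\circ R_h$ acting on the \emph{exact} solution versus the finite element bound \eqref{superconvergence} for $u_h-R_hu$---and each supplies exactly one extra half power $t^{-\al/2}$ of smoothing. One must also be confident that $u(t)\in H^3(\Om)$ with $\|u(t)\|_{H^3}\le C|u(t)|_3$; on a polygonal domain this is a genuine regularity requirement, and it is here that the planarity and strong uniformity of the mesh in the hypotheses enter, since they underlie the validity of \eqref{recovered-grad}--\eqref{recover-stability}. Granting this regularity, the remaining work is only the short spectral computation above and a direct appeal to Remark \ref{super}.
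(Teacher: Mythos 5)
Your proposal is correct and follows essentially the same route as the paper: the same splitting through $G_h(R_hu(t))$, the superconvergence bound \eqref{recovered-grad} together with the stability \eqref{recover-stability} of $G_h$ on $X_h$, and the improved gradient estimate \eqref{superconvergence} from Remark \ref{super} for $u_h-R_hu$. The only difference is in how the smoothing bound $|u(t)|_3\le Ct^{-\al/2}|v|_2$ is obtained: you rederive it by a direct spectral computation (correctly observing that Theorem \ref{thm:fdereg} as stated stops at $p\le 2$), whereas the paper gets it from that theorem anyway by using the equation $\Dal u=\Delta u$, so that $|u(t)|_3=|\Dal u(t)|_1$ falls under the admissible case $\ell=1$, $p=1$, $q=2$ --- two phrasings of the same estimate.
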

\begin{proof}
It follows from the fact that $u$ satisfies equation
\eqref{eq1}, i.e., $\Dal u(t)=\Delta u$ and from Theorem \ref{thm:fdereg}
(with $\ell=1$, $p=1$ and $q=2$) that
\begin{equation*}
   |u(t) |_3 \le C t^{-\al/2} | v |_2.
\end{equation*}
Then using the above super-convergent recovery operator $G_h$ of the gradient with the properties \eqref{recovered-grad},
\eqref{recover-stability} and the estimate  \eqref{superconvergence} for $\theta(t)=R_h
u(t) - u_h(t)$, we get
\begin{equation*}
  \begin{aligned}
    \|\nabla u(t)-G_h(u_h(t))\|& = \|\nabla u(t)-G_h(R_h u(t))\| + \|G_h(R_h u(t)-u_h(t))\|\\
       &\le Ch^2\|u (t) \|_{H^3(\Om)} + C\| \nabla \theta (t)\| \\
       &\le Ch^2t^{-\al/2} \| v \|_2
  \end{aligned}
\end{equation*}
which shows the desired estimate.
\hfill
\end{proof}

\begin{remark}\label{super-gradient}
By repeating the proof of Theorem \ref{th:superconvergence} and appealing to Remark
\ref{super-lumped}, we can derive the following error estimate for the solution of the
lumped mass Galerkin FEM 
\begin{equation*}
 \| \nabla u(t) - G_h(\luh(t)) \| \le C h^2\ell_ht^{-\al/2} \|v\|_2, \quad \ell_h=|\ln h|.
\end{equation*}
\end{remark}

\begin{remark}\label{rem:regular-meshes}
 Obviously any strongly regular triangulation is also symmetric at each internal
vertex and therefore for such meshes we have as well optimal convergence
in $L_2$-norm for nonsmooth data; see \eqref{L2-improved}.
\end{remark}

\section{Numerical results}\label{numerics}

In this section, we present some numerical results to verify the error estimates.
We consider the following one-dimensional problem on the unit interval $(0,1)$
\begin{equation}
\begin{split}
       \Dal u(x,t)-\partial_{xx}^2 u(x,t)&=0,  \quad 0< x <1 \quad 0\le t\le T,\\
       u(0,t)=u(1,t)&=0, \quad  0\le t \le T, \\
       u(x,0)&=v(x), \quad  0\le x \le 1.
\end{split}
\end{equation}
We performed numerical tests on five different data:
\begin{enumerate}
 \item[(a)]  Smooth initial data: $v(x)= -4x^2+4x$; in this case the initial data $v$ is
  in $H^2(\Om)\cap H_0^1(\Om)$, and the exact solution $u(x,t)$ can be represented by a rapidly
  converging Fourier series:
  \begin{equation*}
    u(x,t)=\frac{16}{\pi^3}\sum^\infty_{n=1}\frac{1}{n^3}E_{\al,1}(-n^2\pi^2t^\al)(1-(-1)^n)\sin n\pi x. 
  \end{equation*}
 \item[(b)]  Initial data in $\dot H^1$ (intermediate smoothness):
  \begin{equation}
     v(x)= \left \{\begin{array}{ll}
        x, & \quad x \in [0,\frac{1}{2}], \\[1.3ex]
       1-x,  & \quad x \in (\frac{1}{2},1].
    \end{array}\right.
  \end{equation}
 \item[(c)]  Nonsmooth initial data: (1) $ v(x)=1$, (2) $v(x)=x$,
and (3)  $v(x)= \chi_{[0,\frac{1}{2}]}$,
the characteristic function of the interval $(0,\frac12)$.
Since this choice of $v$
is not compatible with the homogeneous Dirichlet boundary data,
obviously, in all three cases $v \notin H^1_0$. However, in all these
examples $v \in H^s$, $0< s <\frac12$.
\item[(d)] We also consider initial data $v$ that is a Dirac $\delta_\frac12(x)$-function
concentrated at $x=\frac12$. Such weak data is not covered by our theory. However,
it is interesting to see how the method performs
for such highly nonsmooth initial data. We note that the choice of the Dirac
delta function as initial data
is common for certain parameter identification problems in fractional
diffusion problems \cite{ChengNakagawaYamamoto:2009}.

\item[(e)] Variable coefficient case (cf.
\eqref{general-form}): we take $k(x)= 3 +\sin(2\pi x)$ and initial condition $v(x)=1$.
This class of problems was discussed in Section \ref{general_problems}.
\end{enumerate}

The exact solution for
each example from (a) to (d) can be expressed by an infinite series involving
the Mittag-Leffler function $E_{\alpha,1}(z)$.
To accurately evaluate the Mittag-Leffler functions,
we employ the algorithm developed in \cite{Seybold:2008}, which is based on three different approximations
of the function, i.e., Taylor series, integral representations and
exponential asymptotics, in different regions of the domain.

We divide the unit interval $(0,1)$ into $N+1$ equally spaced subintervals, with a mesh size $h=1/(N+1)$.
The space $X_h$ consists of continuous piecewise linear functions on the partition.
In the case of a constant coefficient $k(x)$ (cf. Section \ref{general_problems})
we can represent the exact solution to the semidiscrete problem
by \eqref{Eh} for the standard semidiscrete Galerkin method and
by \eqref{eqn:Fh} for the lumped mass method using the
eigenpairs $(\la^h_j, \fy^h_j(x) )$ and $( \bar\la^h_j, \bar \fy^h_j(x) )$ of the respective one-dimensional
discrete Laplacian $-\Delta_h$ and $-\bar \Delta_h$, 
i.e.,
\begin{equation*}
  (-\Delta_h \fy^h_j,v)=\la^h_j (\fy^h_j,v) \quad\mbox{ and }\quad
  (-\bar\Delta_h \bar\fy^h_j,v)_h=\bar\la^h_j (\bar\fy^h_j,v)_h \quad \forall v \in X_h.
\end{equation*}
Here $(w,v)$ and $(w,v)_h$ refer to the standard $L_2$-inner product and
the approximate $L_2$-inner product \eqref{h-inner} 
on the space $X_h$, respectively.
Then 
\begin{equation*}
 \la^h_j= 
  \bar{\la}_j^h=\frac{4}{h^2}\sin^2\frac{\pi j}{2(N+1)}
 ~~\mbox{and} ~~ \fy^h_j(x_k)=\bar\fy^h_j(x_k) = \sqrt2 \sin(j\pi x_k), ~~j=1,2,\dots,N
\end{equation*}
for $x_k$ being a mesh
point and linear over the finite elements. These will be used
in computing the finite element approximations by the Galerkin and lumped mass methods.

We also have used a direct numerical solution technique 
by first discretizing the time interval, $t_n=n\tau$,
$n=0,1,\dots$, with $\tau$ being the time step size, and then using a weighted
finite difference approximation of the fractional derivative $\Dal u(x,t_n)$
developed in \cite{LinXu:2007}:
\begin{equation*}
  \begin{aligned}
    \Dal u(x,t_n) &= \frac{1}{\Gamma(1-\al)}\sum^{n-1}_{j=0}\int^{t_{j+1}}_{t_j}
       \frac{\partial u(x,s)}{\partial s} (t_n-s)^{-\al}\, ds \\
    &\approx \frac{1}{\Gamma(1-\al)}\sum^{n-1}_{j=0} \frac{u(x,t_{j+1})-u(x,t_j)}{\tau}\int_{t_j}^{t_{j+1}}(t_n-s)^{-\al}ds\\
    &=\frac{1}{\Gamma(2-\alpha)}\sum_{j=0}^{n-1}b_j\frac{u(x,t_{n-j})-u(x,t_{n-j-1})}{\tau^\alpha},
  \end{aligned}
\end{equation*}
where the weights $b_j=(j+1)^{1-\alpha}-j^{1-\alpha}$, $j=0,1,\ldots,n-1$.
It has been shown that if the solution $u(x,t)$ is sufficiently smooth 
and the time step $\tau$ is a constant, then
local truncation error 
of the approximation  is bounded by 
$C\tau^{2-\al}$ for some $C$ depending only 
on $u$ \cite[equation (3.3)]{LinXu:2007}.
When needed, we have used this approximation on very fine meshes 
in both space and time to compute a reference solution.
Unless otherwise
specified, we have set $\tau=1.0\times10^{-6}$, so
that the error incurred by temporal discretization can be ignored. Whenever
possible, we have
compared the accuracy of this reference solution with the exact representation.
Our experiments show that with a very small time step size, these two produce 
the same numerical results.

For each example, we measure the accuracy of the approximation $u_h(t)$
by the normalized error $\|u(t)-u_h(t)\|/\|v\|$ and $\|\partial_x(u(t)-u_h(t))\|/ \|v\|$.
The normalization enables us to observe the behavior of the error with respect to time in
case of nonsmooth initial data. We shall present only numerical results for the
lumped mass FEM, since that for the Galerkin FEM is almost identical.

\subsection{Numerical experiments for the smooth initial data: example (a)}

In Table \ref{tab:smooth} we report the numerical results for $t=1$ and $\al=0.1,~0.5,~0.95$.
In Figure \ref{fig:smooth}, we show plots of the results from
Table \ref{tab:smooth} in a log-log scale.
We see that the slopes of the error curves are $2$ and $1$, respectively,
for $L_2$- and $H^1$-norm of the error.
In the last column of Table \ref{tab:smooth} we also present the error of the recovered
gradient $G_h( u_h)$. Since in one-dimension the mid-point of each
interval has the desired superconvergence property, the recovered gradient in the
case is very simple, just sampled at these points \cite[Theorem 1.5.1]{Wahlbin-book}. It is clear that the
recovered gradient $G_h(u_h)$ exhibits a $O(h^2)$ convergence rate, concurring with the
estimates in Theorem \ref{th:superconvergence} and Remark \ref{super-gradient}.
It is worth noting that the smaller is the $\al$ value, the larger
is the error (in either the $L_2$- or $H^1$-norm). This is attributed to the
property of the Mittag-Leffler function $E_{\alpha,1}(-\lambda t^\alpha)$, which, asymptotically,
decays faster as  $\alpha$ approaches unity, cf. Lemma \ref{lem:mlfbdd}
and the representation \eqref{E-oper}.

%
\begin{table}[h!]
\caption{Numerical results for smooth data, example (a): $t=1$ and $h=2^{-k}$.}\label{tab:smooth}
\begin{center}
     \begin{tabular}{|c|ccc|ccc|c|}
\hline
\multicolumn{1}{| c |}{} &  \multicolumn{3}{ c|}{$L_2$-error}
              &  \multicolumn{3}{c |}{$H^1$-error} & \multicolumn{1}{c| } {$G_h( u_h)$-error}\\
     \hline
     $k$ &$\al=0.1$&$\al=0.5$&$\al=0.95$&$\al=0.1$&$\al=0.5$&$\al=0.95$ &$\al=0.5$ \\
     \hline
     $3$  & 5.23e-4 & 3.37e-4 & 4.84e-5 & 2.65e-2 & 1.74e-2 & 2.04e-3 & 3.20e-3\\
     $4$ & 1.29e-4 & 8.31e-5 & 1.21e-5 & 1.33e-2 & 8.77e-3 & 1.02e-3 & 8.07e-4\\
     $5$ & 3.21e-5 & 2.07e-5 & 3.05e-6 & 6.69e-3 & 4.39e-3 & 5.11e-4 & 2.03e-4\\
     $6$ & 8.01e-6 & 5.17e-6 & 7.93e-7 & 3.34e-3 & 2.19e-3 & 2.55e-4 & 5.17e-5\\
     $7$& 2.00e-6 & 1.30e-6 & 2.32e-7 & 1.67e-3 & 1.10e-3 & 1.28e-4 & 1.39e-5\\
     \hline
     \end{tabular}
\end{center}
\end{table}

\begin{figure}[h!]
\center
  \includegraphics[width=14cm]{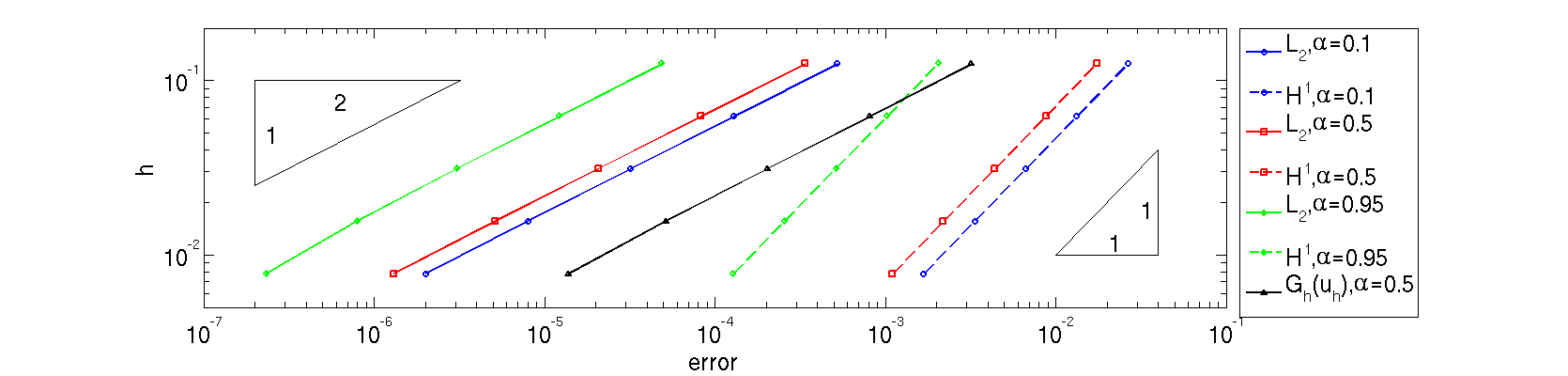}
  \caption{Numerical results for smooth initial data, example (a) with
  $\al=0.1, 0.5, 0.95$ at $t=1$.}\label{fig:smooth}
\end{figure}

\subsection{Numerical experiments for the intermediate case of smoothness of the data, example (b)}
In this example the initial data $v(x)$ is in
$H_0^1(\Om)\cap H^{\frac{3}{2}-\epsilon}(\Om)$ with $\epsilon>0$,
and thus it represents an intermediate case.
All the numerical results reported in Table \ref{tab:intermediate} were evaluated
at $t=1$ for $\al=0.5$.
The slopes of the error curves in a log-log scale are $2$ and $1$ respectively
for $L_2$ and $H^1$ norm of the errors,
which is in agreement with the theory for the intermediate case; \texttt{ratio} in the last
column of Table \ref{tab:intermediate}, refers to the ratio
between the errors as the mesh size $h$ halves.

\begin{table}[h!]
\caption{Numerical results for the intermediate case (b) with $\al=0.5$ at $t=1$.}\label{tab:intermediate}
\begin{center}
     \begin{tabular}{|c|ccccc|c|}
     \hline
     $h$ & $1/8$ & $1/16$ & $1/32$& $1/64$& $1/128$& ratio \\
     \hline
     $L_2$-error & 8.08e-4 & 2.00e-4 & 5.00e-5 & 1.26e-5 & 3.24e-6 & $\approx 3.97$ \\
     $H^1$-error & 1.80e-2 & 8.84e-3 & 4.39e-3 & 2.19e-3 & 1.10e-3 & $\approx 2.00$ \\
     \hline
     \end{tabular}
\end{center}
\end{table}

\subsection{Numerical experiments for nonsmooth initial data: example (c)}
In Tables \ref{tab:nonsmooth1} and \ref{tab:nonsmooth2} we present the
computational results for problem (c), cases (1) and (2). For
nonsmooth initial data, we are particularly interested in errors
for $t$ close to zero, and thus we also present the error at $t=0.005$ and $t=0.01$.
In Figure \ref{fig:nonsmooth} we plot the results shown in Tables \ref{tab:nonsmooth1} and
\ref{tab:nonsmooth2}, i.e., for problem (c), cases (1) and (2).
These numerical results fully confirm the theoretically predicted rates for the nonsmooth initial data.

\begin{figure}[h!]
\center
\subfigure[Error plots for Example (c) (1)]{
  \includegraphics[width=14cm]{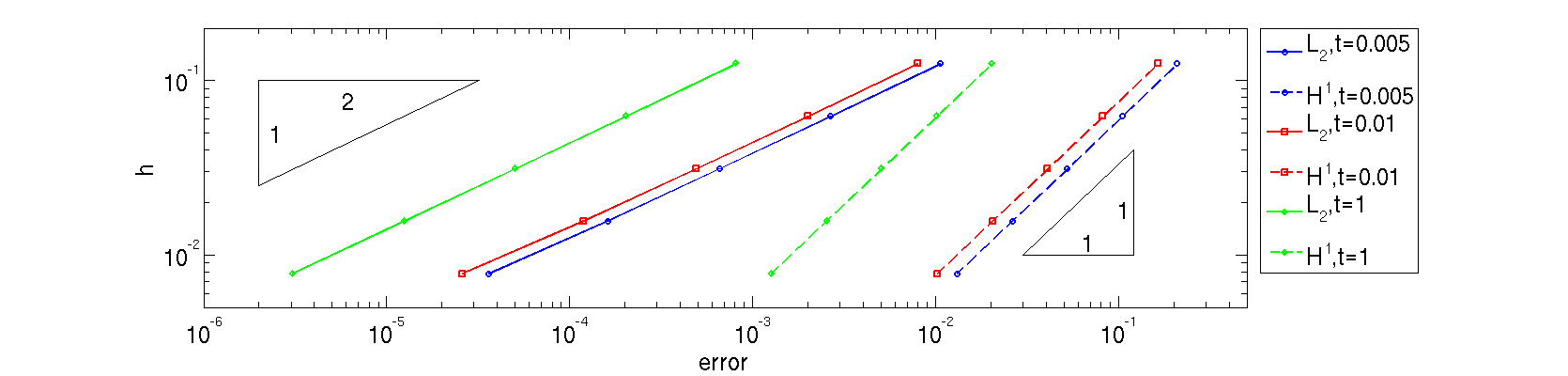}
}
\subfigure[Error plots for Example (c) (2)]{
\includegraphics[width=14cm]{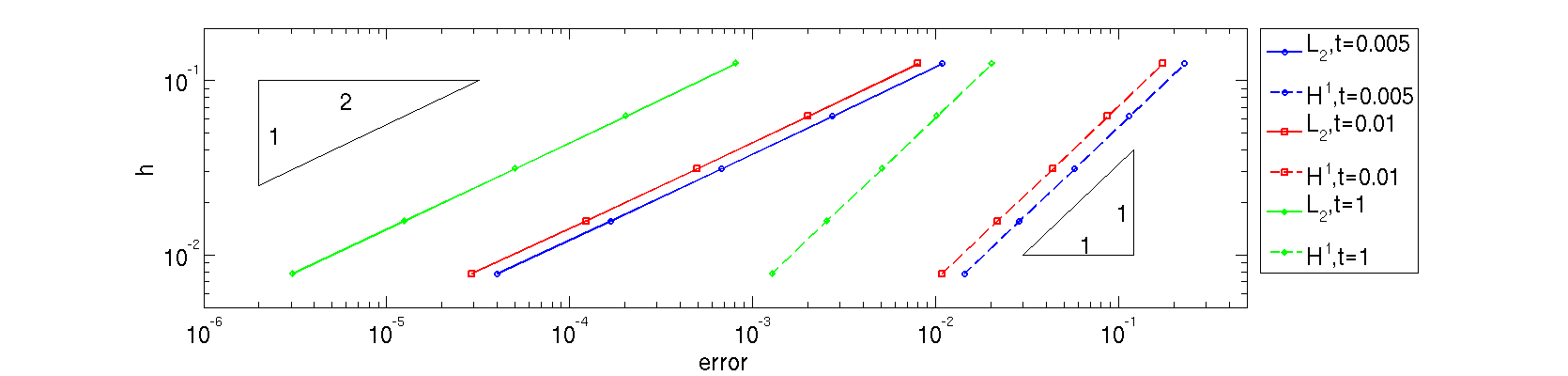}
}
  \caption{Numerical results for nonsmooth initial data with $\al=0.5$ at $t=0.005,~0.01, ~ 1.0$.}
\label{fig:nonsmooth}
\end{figure}

\begin{table}[h!]
\caption{Nonsmooth initial data, example (c) (1): $\al=0.5$ at $t=0.005,0.01,1$.}\label{tab:nonsmooth1}
\begin{center}
     \begin{tabular}{|c|c|ccccc|c|}
     \hline
     & $h$ & $1/8$ & $1/16$ &$1/32$ &$1/64$ & $1/128$ & ratio \\
     \hline
     $t=0.005$ & $L_2$-norm  & 1.06e-2 & 2.65e-3 & 6.63e-4 & 1.65e-4 & 4.02e-5 & $\approx 4.05$ \\
     \cline{2-8}
     & $H^1$-norm  & 2.08e-1 & 1.04e-1 & 5.22e-2 & 2.61e-2 & 1.30e-2 & $\approx 2.00$ \\
    \hline
     $t=0.01$ & $L_2$-norm  & 7.94e-3 & 1.99e-3& 4.93e-4 & 1.19e-4& 2.59e-5& $\approx 4.08$ \\
     \cline{2-8}
     & $H^1$-norm  & 1.63e-1 & 8.16e-2 & 4.08e-2 & 2.04e-2 & 1.02e-2 & $\approx 2.00$ \\
     \hline
     $t=1$ & $L_2$-norm  & 8.07e-4 & 2.02e-4 & 5.03e-5 & 1.25e-5 & 3.05e-6& $\approx 4.02$ \\
     \cline{2-8}
     & $H^1$-norm  & 2.02e-2 & 1.01e-2 & 5.04e-3 & 2.52e-3 & 1.26e-3 & $\approx 2.00$ \\
     \hline
     \end{tabular}
\end{center}
\end{table}

\begin{table}[h!]
\caption{Nonsmooth initial data, example (c)(2): $\al=0.5$ at $t=0.005,0.01,1$.}\label{tab:nonsmooth2}
\begin{center}
     \begin{tabular}{|c|c|ccccc|c|}
     \hline
     Time& $h$& $1/8$ & $1/16$ &$1/32$ &$1/64$ & $1/128$ & ratio\\
     \hline
     $t=0.005$ & $L_2$-norm  & 1.08e-2 & 2.71e-3 & 6.79e-4 & 1.69e-4 & 4.13e-5 &  $\approx 4.03$ \\
     \cline{2-8}
               & $H^1$-norm  & 2.28e-1 & 1.14e-2 & 5.71e-2 & 2.86e-2 & 1.43e-2 & $\approx 2.00$ \\
     \hline
     $t=0.01$  & $L_2$-norm  & 7.98e-3 & 2.00e-3 & 4.99e-4 & 1.23e-4 & 2.91e-5 & $\approx 4.02$  \\
     \cline{2-8}
               & $H^1$-norm  & 1.73e-1 & 8.67e-2 & 4.34e-2 & 2.17e-2 & 1.08e-2 &  $\approx 2.00$ \\
     \hline
     $t=1$     & $L_2$-norm  & 8.05e-4 & 2.01e-4 & 5.03e-5 & 1.25e-5 & 3.07e-6 &  $\approx 4.01$ \\
     \cline{2-8}
               & $H^1$-norm  & 2.02e-2 & 1.01e-2 & 5.07e-3 & 2.53e-3 & 1.27e-3 & $\approx 2.00$ \\
     \hline
     \end{tabular}
\end{center}
\end{table}
Now we consider the third example of nonsmooth case, the characteristic function
of the interval $(0,0.5)$, namely,
$v(x)= \chi_{[0,\frac{1}{2}]}$. Note that if we use the interpolation of $v$
as the initial data for the semidiscrete problem, the $L_2$-error has
only a suboptimal first-order convergence. However, if we
choose $L_2$ projection as is discussed in previous sections, then the results agree well with 
our estimates; see Table \ref{tab:nonsmooth3}. We also discretize
this example by the Galerkin method, and the results are presented in Table
\ref{tab:nonlumped}. A comparison of Tables \ref{tab:nonsmooth3} and \ref{tab:nonlumped}
clearly indicates that the Galerkin method and the lumped mass method yield almost identical
results for this example. Although not presented, we note that similar observations hold for
other examples as well. Hence, we have focused our presentation on the lumped mass method.

\begin{table}[h!]
\caption{Nonsmooth initial data, example (c)(3): $\al=0.5$ at $t=0.005,0.01,1$}\label{tab:nonsmooth3}
\begin{center}
     \begin{tabular}{|c|c|ccccc|c|}
     \hline
     time& $h$& $1/8$ & $1/16$ &$1/32$ &$1/64$ & $1/128$ &  ratio \\
     \hline
     $t=0.005$ & $L_2$-norm  & 8.54e-3 & 2.16e-3 & 5.45e-4 & 1.31e-4 & 3.17e-5&  $\approx 4.06$  \\
     \cline{2-8}
               & $H^1$-norm  & 2.18e-1 & 1.08e-1 & 5.38e-2 & 2.68e-2 & 1.34e-2&  $\approx 2.00$  \\
     \hline
     $t=0.01$  & $L_2$-norm  & 6.54e-3 & 1.64e-3 & 4.14e-4 & 1.06e-4 & 2.90e-5&  $\approx 3.96$ \\
     \cline{2-8}
               & $H^1$-norm  & 1.63e-1 & 8.04e-2 & 4.00e-2 & 2.00e-2 & 9.96e-3 &  $\approx 2.00$ \\
     \hline
     $t=1$     & $L_2$-norm  & 8.10e-4 & 2.03e-4 & 5.07e-5 & 1.27e-5 & 3.23e-6&  $\approx 3.99$ \\
     \cline{2-8}
               & $H^1$-norm  & 1.82e-2 & 9.02e-3 & 4.46e-3 & 2.22e-3 & 1.11e-3&  $\approx 2.01$  \\
     \hline
     \end{tabular}
\end{center}
\end{table}

\begin{table}[h!]
\caption{Nonsmooth initial data, example (c)(3) with
$\al=0.5$ at $t=0.005,0.01,1$ by the Galerkin method.}\label{tab:nonlumped}
\begin{center}
     \begin{tabular}{|c|c|ccccc|c|}
     \hline
     time& $h$& $1/8$ & $1/16$ &$1/32$ &$1/64$ & $1/128$ & ratio \\
     \hline
     $t=0.005$ & $L_2$-norm  & 8.60e-3 & 2.14e-3 & 5.30e-4 & 1.28e-4 & 2.85e-5& $\approx 4.11$ \\
     \cline{2-8}
               & $H^1$-norm  & 1.78e-1 & 9.78e-2 & 5.11e-2 & 2.61e-2 & 1.32e-2 & $\approx 1.96$ \\
     \hline
     $t=0.01$  & $L_2$-norm  & 6.56e-3 & 1.64e-3 & 4.06e-4 & 9.94e-5 & 2.29e-5& $\approx 4.11$ \\
     \cline{2-8}
               & $H^1$-norm  & 1.34e-1 & 7.34e-2 & 3.82e-2 & 1.95e-2 & 9.85e-3 & $\approx 1.95$ \\
     \hline
     $t=1$     & $L_2$-norm  & 8.07e-4 & 2.02e-4 & 5.04e-5 & 1.25e-5 & 3.09e-6& $\approx 4.03$ \\
     \cline{2-8}
               & $H^1$-norm  & 1.54e-2 & 8.30e-3 & 4.29e-3 & 2.18e-3 & 1.10e-3 & $\approx 1.96$  \\
     \hline
     \end{tabular}
\end{center}
\end{table}

\subsection{Numerical experiments for initial data a Dirac $\delta$-function}
We note that this case is not covered by our theory.
Formally, the orthogonal $L_2$-projection $P_h$ is not well defined for such functions.
However, we can look at $(v,\chi)$ for $\chi \in X_h \subset H^1_0(\Om)$ as a duality pairing between
the spaces $H^{-1}(\Om)$ and $H^1_0(\Om)$ and therefore $(\delta_{\frac12},\chi)=\chi(\frac12)$.
If $x=\frac12$ is a mesh point, say $x_{L}$,  then we can define $P_h\delta_{\frac12}$ appropriately with its 
finite element expansion given by the
$L$th column of the inverse of the mass matrix.
This was
the initial data for the semidiscrete problem that we used in our computations.
In Table \ref{tab:Deltafun} we show
the $L_2$- and $H^1$-norm of the error for this case. 
It is noted that the $H^1$-norm of the error converges as $O(h^\frac12)$,
while the error in the $L_2$-norm converges as $O(h^\frac32)$; see the last
column of Table \ref{tab:Deltafun}. It is quite
remarkable that we can practically have 
good convergence rates in $L_2$- and $H^1$-norm for such very weak solutions. 
A theoretical justification of these rates is a subject of our current work.
\begin{table}[!ht]
\caption{Lumped mass FEM with initial data a Dirac $\delta$-function, $\al=0.5$, $t=0.005,0.01,1$.}
\label{tab:Deltafun}
\begin{center}
     \begin{tabular}{|c|c|ccccc|c |}
     \hline
      time & $h$ & $1/8$ & $1/16$ & $1/32$ &$1/64$ & $1/128$ & ratio\\     
     \hline
     $t=0.005$ & $L_2$-norm & 7.24e-2 & 2.66e-2 & 9.54e-3 & 3.40e-3 & 1.21e-3 & $\approx 2.79$\\
     \cline{2-8}
     & $H^1$-norm           & 1.51e0 & 1.07e0 & 7.60e-1 & 5.40e-1 & 3.81e-1 & $\approx 1.41$\\
     \hline
     $t=0.01$ & $L_2$-norm  & 5.20e-2 & 1.89e-2 & 6.77e-3 & 2.40e-3 & 8.54e-4 & $\approx 2.79$\\
     \cline{2-8}
     & $H^1$-norm           & 1.07e0 & 7.59e-1 & 5.37e-1 & 3.80e-1 & 2.70e-1 & $\approx 1.41$ \\
     \hline
     $t=1$ & $L_2$-norm     & 5.47e-3 & 1.93e-3 & 6.84e-4 & 2.42e-4 & 8.56e-5 & $\approx 2.79$\\
     \cline{2-8}
     & $H^1$-norm           & 1.07e-1 & 7.58e-2 & 5.37e-2 & 3.80e-2 & 2.70e-2 & $\approx 1.41$\\
     \hline
     \end{tabular}
\end{center}
\end{table}

\subsection{Numerical experiments for variable coefficients, example (e)}
Although we do not have an explicit representation of
the exact solution, we compare the numerical solution with the approximate solution
obtained on very fine meshes, namely, with mesh-size $h=1/512$ and time-step size $\tau=1.0\times10^{-5}$.
Normalized $L_2$- and $H^1$-norms of the error 
are reported in Table \ref{tab:variable} for $t=0.01$ for $\al=0.5$.
The results confirm the theoretically predicted rate.

\begin{table}[h!]
\caption{Numerical results for variable coefficients and
nonsmooth initial data with $\al=0.5$ at $t=0.01$.}\label{tab:variable}
\begin{center}
     \begin{tabular}{|c|ccccc|c|}
     \hline
     $h$ & $1/8$ & $1/16$ & $1/32$& $1/64$& $1/128$& ratio \\
     \hline
     $L_2$-error & 3.24e-3 & 8.21e-4 & 2.05e-4 & 5.09e-5 & 1.23e-6 & $\approx 4.02$ \\
     $H^1$-error & 7.15e-2 & 3.60e-2 & 1.80e-2 & 8.94e-3 & 4.36e-3 & $\approx 2.01$ \\
     \hline
     \end{tabular}
\end{center}
\end{table}

In summary, the convergence rates observed for all the numerical experiments are in excellent
agreement with the theoretical findings for both smooth and nonsmooth initial data,  including
also the case of the recovered gradient $G_h(u_h)$ discussed in Section \ref{sec:special_meshes},
see also Remark \ref{super-gradient}.

\section*{Acknowledgments}
The research of R. Lazarov and Z. Zhou was supported in parts by US NSF Grant
DMS-1016525. The work of all authors has been  supported also  by Award No. 
KUS-C1-016-04, made by King Abdullah University of
Science and Technology (KAUST).

\bibliographystyle{abbrv}
\bibliography{frac}

\end{document}

\input{Introduction}

\input{Preliminaries}

\input{Galerkin_method}

\input{Other_methods}

\input{Special_meshes}

\input{experiments}

\input{acknowledgment}

\end{document}